\numberwithin{equation}{section}
\pgfplotsset{width=11cm, compat=1.9}
\newtheorem{refthm}{Theorem}
\newtheorem{theorem}{Theorem}
\newtheorem{lemma}{Lemma}
\newtheorem{corollary}{Corollary}
\newtheorem*{theorem*}{Theorem}
\newtheorem*{lemma*}{Lemma}
\newtheorem*{conj*}{Conjecture}
\newtheorem*{corollary*}{Corollary}
\newtheorem*{proposition*}{Proposition}
\newcommand{\rom}[1]{\uppercase\expandafter{\romannumeral #1\relax}}
\newcommand{\lan}{\langle}
\newcommand{\ran}{\rangle}
\newcommand{\R}{\mathbb{R}}
\newcommand{\C}{\mathbb{C}}
\newcommand{\N}{\mathbb{N}}
\newtheorem*{eg}{Example}
\DeclareMathOperator*{\sgn}{sgn}
\title{Restriction of Fractional Derivatives of the Fourier Transform}
\author{Michael Goldberg \and Chun Ho Lau}
\newcommand{\Addresses}{{
  \bigskip
  \footnotesize

  Michael GOLDBERG, \textsc{Department of Mathematical Science, University of Cincinnati, Cincinnati, OH 45221-0025, USA}\par\nopagebreak
  \textit{E-mail address:} \texttt{goldbeml@ucmail.uc.edu}

  \medskip

  Chun Ho LAU, \textsc{Department of Mathematical Science, University of Cincinnati, Cincinnati, OH 45221-0025, USA}\par\nopagebreak
  \textit{E-mail address:} \texttt{lauco@ucmail.uc.edu}
}}
\theoremstyle{remark}
\newtheorem*{remark}{Remark}
\begin{document}
\maketitle

\begin{abstract}
 In this paper, we showed that for suitable $(\beta,p, s,\ell)$ the $\beta$-order fractional derivative with respect to the last coordinate of the Fourier transform of an $L^p(\R^n)$ function is in $H^{-s}$ after restricting to a graph of a function with non-vanishing Gaussian curvature provided that the restriction of the Fourier transform of such function to the surface is in $H^{\ell}$. This is a generalization of the result in \cite{GoldStol}*{Theorem 1.12}.
\end{abstract}
2020 Mathematics Subject Classification. 42B10; 42B20. 
\section{Introduction}
\subsection{Fourier restriction problem}
The desirable properties of the Fourier transform of an $L^p(\R^n)$ function are crucial in classical harmonic analysis with application to Kakeya conjecture, Bochner-Riesz summabilities, and partial differential equations, see \cites{Wolff, S}. Hausdorff-Young's inequality ensures that the Fourier transform of an $L^p(\R^n)$ function is in $L^{\frac{p}{p-1}}(\R^n)$ whenever $1\leq p \leq 2$. However, this does not give us any information if the Fourier transform of an $L^p(\R^n)$ function is restricted to a measure-zero subset $\Sigma\subset \R^n$. The Fourier restriction problem, in the most general form, seeks to determine the complete range of $(p,q)$ for which
\begin{align}\label{FRPqp}
    \|\widehat{f}|_{\Sigma}\|_{L^q(\Sigma)}\lesssim \|f\|_{L^p(\R^n)}
\end{align}
holds for all $f\in L^p(\R^n)$ for a measure-zero set $\Sigma\subset \R^n$ with $n\geq 2$, where $\widehat{f}(\iota)= \int_{\R^n} e^{-2\pi i \iota \cdot x} f(x)dx$. Different choices of $\Sigma$ and $n$ lead to different ranges of $(p,q)$ and it remains an open problem in most cases, and the best result currently in $\R^3$ for parabolas is \cite{Wang1}; and for a higher dimension, see \cite{HR}. We also refer to the reader \cite{DO} for the survey on Fourier restriction estimates on different $\Sigma$. In particular, if $\Sigma$ is a two-dimensional sphere or a compact parabola around the origin, the range of $(p,q)$ is completely characterized by \cite{Feff}. It is also worth mentioning that the Tomas-Stein Theorem, established in \cites{tomas, Stein1}, states that Equation \eqref{FRPqp} holds for $q=2$ and $1\leq p\leq \frac{2n+2}{n+3}$ with $\Sigma$ having non-zero Gaussian curvature at the origin. 

One reason for imposing $\Sigma$ having a point with non-vanishing Gaussian curvature is to rule out ``uninteresting" cases. If we consider $\Sigma$ to be a hyperplane in $\R^{n}$, then the only possible range of $(p,q)$ is $q=\infty$ and $1\leq p\leq \infty.$ This may suggest that we should consider $\Sigma$ having non-zero Gaussian curvature at origin (without losing generality).  In this paper, we will assume that $\Sigma$ is a graph of a function $h$ defined on $\R^{n-1}$ with a vanishing gradient at 0 and an invertible Hessian matrix at 0. A precise statement will be given in Section \ref{notations}.

The next step of the Fourier restriction problem is to consider the possibilities of the derivative of the Fourier transform of an $L^p(\R^n)$ function. In \cite{GoldStol}, various versions of the derivative Fourier restriction problem are discussed, with the core idea being to impose suitable assumptions to obtain $\phi \nabla^k\widehat{f} \in H^{-s}(\Sigma)$, or more importantly, $\phi \frac{\partial^{\beta}\widehat{f}}{\partial \xi_n^{\beta}} \in H^{-s}(\Sigma)$ for a fixed $\phi$. Here, $g\in H^{-s}(\Sigma)$ means $g|_{\Sigma}\in H^{-s}(\Sigma)$, and $H^{-s}(\Sigma)$ is the $L^2$-based Bessel potential space as defined in Section \ref{notations}. If $\|\phi \nabla^k\widehat{f}\|_{ H^{-s}(\Sigma)}\lesssim \|f\|_{L^p(\R^n)}$, then the trace of $\nabla^k\widehat{f}\in H^{-s}_{loc}(\Sigma)$ for $f\in L^p(\R^n)$. Cho, Guo, and Lee \cite{CGL}*{Theorem 1.1} studied the trace value of $\widehat{f}$ in Sobolev spaces, and G. and Stolyarov \cite{GoldStol}*{Proposition 1.1} used this to obtain a characterization of exponents for which $\|\phi \nabla^k\widehat{f}\|_{ H^{-s}(\Sigma)}\lesssim \|f\|_{L^p(\R^n)}$ holds. More precisely,
\begin{refthm}[\cite{GoldStol}*{Proposition 1.1}] \label{prop1.1}
Let $p>1$. The inequality  $\|\phi \nabla^k\widehat{f}\|_{ H^{-s}(\Sigma)}\lesssim \|f\|_{L^p(\R^n)}$ holds for all $f\in L^p(\R^n)$ if and only if 
$k\leq s$, $k<\sigma_p$, and $2k-s\leq\kappa_p$, where $\kappa_p = \frac{n+1}{p}-\frac{n+3}{2}$ and $\sigma_p=\frac{n}{p}-\frac{n+1}{2}$.
\end{refthm}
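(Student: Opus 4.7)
The plan is to prove both directions of this characterization by combining explicit test functions (for necessity) with a duality argument resting on the Cho-Guo-Lee trace theorem (for sufficiency).

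For necessity, I would exhibit three families of test functions, each saturating one of the three inequalities. First, a modulation family $f_\lambda(x) = \eta(x) e^{2\pi i \lambda x_n}$ for a fixed Schwartz cutoff $\eta$ has $\|f_\lambda\|_{L^p} \sim 1$, while $\nabla^k \widehat{f_\lambda}$ oscillates with frequency $\sim \lambda$ along $\Sigma$ with amplitude $\sim \lambda^k$, so the $H^{-s}(\Sigma)$ norm scales as $\lambda^{k-s}$ and forces $k \leq s$. Second, isotropic rescalings $f_R(x) = \eta(x/R) e^{2\pi i \xi_0 \cdot x}$ with $\xi_0$ the origin of $\Sigma$ produce a bump of $\widehat{f_R}$ near $\xi_0$ at scale $R^{-1}$, and a comparison of the $L^p$ and $H^{-s}$ norms yields the condition $k < \sigma_p$, with the strict inequality arising from logarithmic divergences at the borderline. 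Third, Knapp examples in which $\widehat{f_\delta}$ is concentrated on a cap of tangential radius $\delta$ and normal thickness $\delta^2$ satisfy $\|f_\delta\|_{L^p} \sim \delta^{(n+1)(1-1/p)}$, while the normal derivatives of $\nabla^k \widehat{f_\delta}$ reach order $\delta^{-2k}$ on $\Sigma$; matching scales produces $2k - s \leq \kappa_p$.

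For sufficiency, I would dualize. Since the dual of $H^{-s}(\Sigma)$ is $H^s(\Sigma)$, the target inequality is equivalent to
$$\left| \int_\Sigma \phi(\xi)\, \nabla^k \widehat{f}(\xi)\, g(\xi)\, d\sigma(\xi) \right| \lesssim \|f\|_{L^p(\R^n)} \|g\|_{H^s(\Sigma)}$$
for every $g \in H^s(\Sigma)$. Using $\nabla^k \widehat{f} = \widehat{(-2\pi i x)^k f}$ and Parseval against the measure $\mu_g = \phi g\, d\sigma$ supported on $\Sigma$, the left side equals $(2\pi)^k\, |\langle x^k f, \widehat{\mu_g}\rangle|$, so H\"older reduces the problem to the weighted extension estimate
$$\bigl\| x^k\, \widehat{\phi g\, d\sigma}\bigr\|_{L^{p'}(\R^n)} \lesssim \|g\|_{H^s(\Sigma)}.$$
I would then decompose $\phi g$ via a Littlewood-Paley partition on $\Sigma$; each dyadic piece $(\phi g)_j$, carrying frequency $\sim 2^j$ on $\Sigma$, has extension $\widehat{(\phi g)_j\, d\sigma}$ whose $L^{p'}$ profile is controlled by the dual of the Cho-Guo-Lee trace theorem at regularity $\sigma_p$. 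Stationary phase shows that this extension decays rapidly outside a normal tube of length $\sim 2^{2j}$, so the polynomial weight $x^k$ enlarges each piece by a controlled factor; balancing this gain against the $H^s$ cost of resolving a frequency band then produces a dyadic sum whose convergence is governed precisely by the three stated conditions.

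The main obstacle will be handling the borderline cases. When $2k - s = \kappa_p$, the dyadic sum over $j$ is critical, and closing it requires exploiting the strict inequality $k < \sigma_p$, which supplies a genuine gain over the Cho-Guo-Lee endpoint. The case $k = s$ is equally delicate, since tangential Sobolev regularity must then exactly absorb all the derivatives; here I would rely on careful bookkeeping of tangential versus normal derivatives in local coordinates, splitting $\nabla^k$ accordingly and using that tangential derivatives cost one order of Sobolev regularity each while normal derivatives are handled through the weighted extension step. Interpolating between these two endpoints via a Stein-Weiss scheme should then recover the full range claimed.
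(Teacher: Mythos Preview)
This statement is not proved in the present paper: it is quoted verbatim from \cite{GoldStol}*{Proposition 1.1} as background, and the only comment here is the one-line remark that the same characterization extends from integer $k$ to fractional $\beta$ ``with the aid of Lemma~\ref{useful1}.'' So there is no proof in the paper to compare your proposal against. That said, your overall architecture matches what the paper attributes to \cite{GoldStol}: sufficiency is indeed obtained from the Cho--Guo--Lee trace theorem (by duality it becomes a weighted extension bound, exactly the kernel estimate encoded in Lemma~\ref{useful1}), and necessity comes from scaling examples.

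There is, however, a concrete gap in your first necessity argument. For $f_\lambda(x)=\eta(x)e^{2\pi i\lambda x_n}$ one has $\widehat{f_\lambda}(\xi)=\widehat{\eta}(\xi',\xi_n-\lambda)$, so the restriction to $\Sigma$ is $\widehat{\eta}(\xi',h(\xi')-\lambda)$. Since $h$ is bounded on $\supp\phi$ and $\widehat{\eta}$ is Schwartz, this is $O(\lambda^{-N})$ for every $N$; the same holds for $\nabla^k\widehat{f_\lambda}|_\Sigma$. The family does not ``oscillate along $\Sigma$ with amplitude $\sim\lambda^k$'' --- it simply vanishes rapidly --- and therefore cannot force $k\le s$. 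To recover this condition you need a family whose Fourier transform stays anchored to $\Sigma$ while its \emph{normal} profile becomes increasingly singular: for instance, take $\widehat{f_\epsilon}(\xi)=\psi(\xi')\chi\bigl((\xi_n-h(\xi'))/\epsilon\bigr)$ with a fixed bump $\psi$ and $\chi$, so that $\partial_{\xi_n}^k\widehat{f_\epsilon}|_\Sigma=\epsilon^{-k}\psi(\xi')\chi^{(k)}(0)$ is a fixed smooth function on $\Sigma$ (hence its $H^{-s}$ norm is independent of $s$ to leading order), and then track $\|f_\epsilon\|_{L^p}$ as $\epsilon\to 0$. Your Knapp and isotropic-dilation families for the other two inequalities are the standard ones and should go through as you describe.
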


Moreover, G. and Stolyarov also established some necessary conditions and some sufficient conditions for 
\begin{align} \label{HDR1}
    \|\phi \nabla^k \widehat{f}\|_{H^{-s}(\Sigma)}\lesssim \|f\|_{L^p(\R^n)}+\|\phi \widehat{f}\|_{H^{\ell}(\Sigma)}
\end{align}
for all $f\in L^p(\R^n)$ with $\widehat{f}\phi\in H^{\ell}(\Sigma)$ given $\phi\in C^{\infty}_c(\Sigma)$, where $C^{\infty}_c(\Sigma)$ is the space of all smooth functions with support properly contained in $\Sigma$.

More precisely, they (\cite{GoldStol}*{Theorem 1.11}) showed that if \eqref{HDR1} holds, $p>1$, $s\geq 0$, and $\ell \geq 0$, then the exponents have to satisfy 
\begin{align}
    \label{N1} &k\leq \min\lbrace s+1, s+\ell\rbrace; \quad  \quad   k< \sigma_p; \quad   \quad  2k-s \leq \kappa_p;\\
   \label{N2} &\frac{k\ell}{s+\ell-k}\leq \kappa_p \quad\text{ if $k>s$.}
\end{align}
However, the sufficient conditions do not match with the necessary conditions due to the constraint $k\in \N$. In order to have \eqref{HDR1}, other than \eqref{N1} and \eqref{N2}, they (\cite{GoldStol}*{Theorem 1.12}) additionally assume that $\frac{n+1}{p}-\frac{n+3}{2}\in \N$ and either $2\lceil \frac{\ell-1}{\ell}\kappa_p\rceil \leq \kappa_p$ or $k\leq \lfloor \frac{\kappa_p}{2}\rfloor$ hold, where $\lceil x\rceil$ is the smallest integer greater than or equal to $x$ and $\lfloor x\rfloor$ is the greatest integer less than or equal to $x$. If $2\lceil \frac{\ell-1}{\ell}\kappa_p\rceil \leq \kappa_p$ and $k\leq \lfloor \frac{\kappa_p}{2}\rfloor$ do not hold, then one has to assume that $s\geq k-\frac{\kappa_p-k}{\kappa_p-\lfloor \kappa_p/2 \rfloor}$. In particular, if $k=1$ and $s=0$, 
\begin{refthm}[\cite{GoldStol}*{Corollary 1.13}] \label{GSC113}
    Let $m$ be an integer in $[2, \frac{n-1}{2})$ and $\ell\geq \frac{m}{m-1}$. Then, 
    $$\|\phi \nabla \widehat{f}\|_{L^2(\Sigma)}\lesssim \|f\|_{L^{\frac{2n+2}{n+3+2m}}(\R^n)}+\|\phi \widehat{f}\|_{H^{\ell}(\Sigma)}$$
    for all $f\in\mathcal{S}(\R^n)$ and $\phi\in C^{\infty}_c(\Sigma)$.
\end{refthm}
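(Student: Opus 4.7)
The plan is to obtain this as a direct application of Theorem 1.12 of \cite{GoldStol} with the parameter choices $k=1$, $s=0$, and $p = \frac{2n+2}{n+3+2m}$. The only real work is to verify the numerical hypotheses listed in that theorem, together with \eqref{N1}--\eqref{N2}, for these values.

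First I would compute the critical exponent. With $p = \frac{2n+2}{n+3+2m}$ one obtains
\begin{align*}
\kappa_p \;=\; \frac{n+1}{p} - \frac{n+3}{2} \;=\; \frac{n+3+2m}{2} - \frac{n+3}{2} \;=\; m,
\end{align*}
so the integrality assumption $\kappa_p \in \N$ in Theorem 1.12 becomes $m \in \N$, which is given. Similarly,
\begin{align*}
\sigma_p \;=\; \frac{n}{p} - \frac{n+1}{2} \;=\; \frac{n(n+3+2m)-(n+1)^2}{2(n+1)} \;=\; \frac{(2m+1)n-1}{2(n+1)},
\end{align*}
and for $m \geq 2$ and $n \geq 2$ one easily checks $\sigma_p > 1 = k$.

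Next I would verify the necessary conditions \eqref{N1} and \eqref{N2}. The condition $2k-s \leq \kappa_p$ reads $2 \leq m$, which is given. The condition $k \leq \min\{s+1,s+\ell\}$ reads $1 \leq \min\{1,\ell\}$, which holds because $\ell \geq \tfrac{m}{m-1} \geq 1$. Since $k=1 > 0 = s$, one must also verify \eqref{N2}: $\frac{k\ell}{s+\ell-k} = \frac{\ell}{\ell-1} \leq m$, which is precisely the assumption $\ell \geq \tfrac{m}{m-1}$.

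Finally, I would check the extra hypothesis in Theorem 1.12. That theorem requires either $2\lceil \tfrac{\ell-1}{\ell}\kappa_p\rceil \leq \kappa_p$ or $k \leq \lfloor \kappa_p/2\rfloor$. The second alternative reads $1 \leq \lfloor m/2 \rfloor$, which holds for every integer $m \geq 2$. Hence Theorem 1.12 applies and yields \eqref{HDR1} with $k=1$, $s=0$, giving exactly
\begin{align*}
\|\phi \nabla \widehat{f}\|_{L^2(\Sigma)} \;\lesssim\; \|f\|_{L^{\frac{2n+2}{n+3+2m}}(\R^n)} + \|\phi \widehat{f}\|_{H^{\ell}(\Sigma)}
\end{align*}
for all $f \in \mathcal{S}(\R^n)$ and $\phi \in C^{\infty}_c(\Sigma)$. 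There is no serious obstacle here; essentially every condition of Theorem 1.12 collapses to a triviality once one observes that $\kappa_p = m$, and the hypothesis $\ell \geq m/(m-1)$ in the corollary is nothing more than the necessary condition \eqref{N2} rewritten.
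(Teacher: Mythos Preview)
Your proposal is correct and matches the paper's own treatment: the paper presents this result as \cite{GoldStol}*{Corollary 1.13} and introduces it with the phrase ``In particular, if $k=1$ and $s=0$,'' immediately after describing \cite{GoldStol}*{Theorem 1.12}, indicating that the corollary is obtained exactly as you do, by specializing Theorem 1.12 to $k=1$, $s=0$, $p=\frac{2n+2}{n+3+2m}$ and verifying the hypotheses. Your verification of the numerical conditions (in particular $\kappa_p=m$, $\sigma_p>1$, and $1\le\lfloor m/2\rfloor$) is accurate.
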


In order to remove the constraint $k\in \N$, we need to consider an alternative for $\frac{\partial}{\partial x_n}$, and the Riesz fractional derivative, defined by $[(-\Delta)^sf]\widehat{\ } (\xi)= c_n|\xi|^{2s}\widehat{f}(\xi)$, would be a good choice. The question now becomes: 

\begin{description}
    \item[Question:] For what ranges of $(\beta, p,s, \ell)$ does the inequality 
    $$\|\phi (-\Delta)^{\beta/2}\widehat{f}\|_{H^{-s}(\Sigma)}\lesssim \|f\|_{L^p(\R^n)}+\|\phi \widehat{f}\|_{H^{\ell}(\Sigma)}$$
    hold true for $f\in\mathcal{S}(\R^n)$?
\end{description}
In this paper, we will focus on the derivative along the transversal direction. We remark that Theorem \ref{prop1.1} holds in this case by replacing $k$ by $\beta$ with the aid of Lemma \ref{useful1}.

\subsection{Results and outline}
Our first main theorem is to provide a sufficient condition so that the fractional derivative of $\widehat{f}$ is in $L^2(\Sigma)$.
\begin{theorem} \label{submain1}
    Suppose that $f\in L^p(\R^n)$ has $\widehat{f}|_{\Sigma}\in H^{\ell}(\Sigma)$, and $\phi\in C^{\infty}_c(\Sigma)$. If $p>1$, $ 2\beta \leq \kappa_p$, $2\beta<\sigma_p$, $0\leq \beta \leq 1$, and $\ell \geq \frac{\kappa_p\beta}{\kappa_p-\beta}$, then
\begin{enumerate}
        \item we have $(-\Delta_{\xi_n})^{\frac{\beta}{2}}\widehat{f} = c_n \widehat{|x_n|^{\beta}f}\in L^2(\Sigma)$ and 
$$\|\phi  (-\Delta_{\xi_n})^{\frac{\beta}{2}}\widehat{ f}\|_{L^2(\Sigma)}\lesssim_{n,p,\beta,\ell} \|f\|_{L^p(\R^n)} + \|\phi \widehat{f}\|_{H^{\ell}(\Sigma)}, $$
where $-\Delta_{\xi_n}$ denotes the Laplacian with respect to the $\xi_n$-variable.
\item if $\beta =1$, we have $\frac{\partial}{\partial\xi_{n}}\widehat{f}\in L^2(\Sigma)$ and 
$$\bigg\|\phi  \frac{\partial \widehat{ f}}{\partial\xi_{n}}\bigg\|_{L^2(\Sigma)}\lesssim_{n,p,\beta,\ell} \|f\|_{L^p(\R^n)} + \|\phi \widehat{f}\|_{H^{\ell}(\Sigma)} $$

\end{enumerate}


When $p=1$, we additionally require $\ell > \frac{\kappa_1\beta}{\kappa_1-\beta}$ obtaining the above result.
\end{theorem}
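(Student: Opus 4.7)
My plan is to use Stein's complex interpolation theorem between the two natural endpoints $\beta=0$ and $\beta=1$, both of which admit direct, non-fractional arguments. Concretely, I would set up the analytic family
\begin{equation*}
T_z f \;=\; (2\pi)^z\,\phi(\xi)\,\widehat{\,|x_n|^z f\,}(\xi)\big|_{\xi\in\Sigma},\qquad 0\le\mathrm{Re}(z)\le 1,
\end{equation*}
which satisfies $T_\beta f=\phi\,(-\Delta_{\xi_n})^{\beta/2}\widehat{f}\big|_\Sigma$ at the real value $z=\beta$, using the identity $(-\Delta_{\xi_n})^{z/2}\widehat{f}=(2\pi)^z\widehat{|x_n|^z f}$ recorded in the statement. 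The prefactor $(2\pi)^z$ is bounded in modulus on the strip, so Stein admissibility reduces to polynomial growth of the boundary estimates in $|\mathrm{Im}(z)|$.

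On the line $\mathrm{Re}(z)=0$, $|x_n|^{it}$ is unimodular, so multiplication by it is an $L^{p_0}$-isometry; choosing $p_0$ with $\kappa_{p_0}=0$, the Tomas--Stein theorem delivers $\|T_{it}f\|_{L^2(\Sigma)}\lesssim\|f\|_{L^{p_0}}$ uniformly in $t$. On the line $\mathrm{Re}(z)=1$, I would split $|x_n|^{1+it}f=|x_n|\cdot\bigl(|x_n|^{it}f\bigr)$ and apply a $\beta=1$ endpoint inequality
\begin{equation*}
\|\phi\,\widehat{|x_n|g}\|_{L^2(\Sigma)} \;\lesssim\; \|g\|_{L^{p_1}(\R^n)} + \|\phi\,\widehat{g}\|_{H^{\ell_1}(\Sigma)},
\end{equation*}
valid whenever $\kappa_{p_1}\ge 2$ and $\ell_1\ge \kappa_{p_1}/(\kappa_{p_1}-1)$, to $g=|x_n|^{it}f$. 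Interpolating with parameter $\theta=\beta$ produces $\kappa_p=\beta\kappa_{p_1}$ and $\ell=\beta\ell_1$; solving for $\kappa_{p_1}$ and $\ell_1$ reproduces exactly the stated hypotheses $2\beta\le\kappa_p$ and $\ell\ge\kappa_p\beta/(\kappa_p-\beta)$. Part (ii) follows from part (i) at $\beta=1$ since the ordinary derivative $\partial_{\xi_n}$ differs from $(-\Delta_{\xi_n})^{1/2}$ by the one-dimensional Hilbert transform in $\xi_n$, which is $L^2$-bounded. The $p=1$ case is recovered by taking $\ell$ strictly above the interpolated value to absorb the loss at the Tomas--Stein endpoint.

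The principal obstacle is the $\mathrm{Re}(z)=1$ endpoint, because Corollary~\ref{GSC113} provides it only for integer $\kappa_{p_1}$, whereas the interpolation forces the continuous range $\kappa_{p_1}=\kappa_p/\beta\in[2,\infty)$. I plan to establish this endpoint directly by adapting the Goldberg--Stolyarov extension-and-split strategy: extend $\phi\widehat{f}|_\Sigma$ to a function $G$ on $\R^n$ by a Sobolev extension of order $\ell_1+\tfrac12$, decompose $\phi\widehat{f}=G+R$ with $R|_\Sigma=0$, bound the $G$-contribution by the standard trace estimate for $(-\Delta_{\xi_n})^{1/2}G$ on $\Sigma$, and treat the $R$-contribution by Tomas--Stein applied to $|x_n|f$ after exploiting the vanishing of $R$ on $\Sigma$ to transfer a derivative. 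A secondary but essential check is that the unimodular multiplier $|x_n|^{it}$ produces at most polynomial growth in $|t|$ for the $H^{\ell_1}(\Sigma)$ norm of $\widehat{g}|_\Sigma$; since its Fourier conjugate is convolution in $\xi_n$ with a Calder\'on--Zygmund kernel of order $-1$, this follows from a Mihlin-type multiplier argument combined with a local coordinate change flattening $\Sigma$ so that the intrinsic and Euclidean Sobolev norms are comparable.
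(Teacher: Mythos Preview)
Your Stein-interpolation strategy has a genuine gap at what you label the ``secondary but essential check'': you need $\|\phi\,\widehat{|x_n|^{it}f}\,\|_{H^{\ell_1}(\Sigma)}$ controlled by $\|\phi\,\widehat{f}\,\|_{H^{\ell_1}(\Sigma)}$ (or by the full right-hand side) with at most polynomial growth in $|t|$, and this fails. Multiplication by $|x_n|^{it}$ on the physical side becomes, on the Fourier side, a singular integral in the \emph{transversal} variable $\xi_n$ (convolution with $c(t)|\xi_n|^{-1-it}$). The value of $\widehat{|x_n|^{it}f}$ at a point $(\xi',h(\xi'))\in\Sigma$ therefore depends on $\widehat{f}(\xi',\,\cdot\,)$ along the entire vertical line, not just on $\widehat{f}|_\Sigma$; the norm $\|\phi\widehat{f}\|_{H^{\ell_1}(\Sigma)}$ carries no information about those off-surface values, and the $L^{p_1}$ norm alone cannot manufacture positive-order Sobolev regularity on $\Sigma$. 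Flattening $\Sigma$ does not help, since the non-locality is transversal in any coordinates, and a Mihlin argument only controls tangential multipliers. Exactly the same obstruction breaks your reduction of part~(ii) to part~(i): passing from $\widehat{|x_n|f}$ to $\widehat{x_n f}$ is the Hilbert transform in $\xi_n$, and its trace on $\Sigma$ is not determined by $(-\Delta_{\xi_n})^{1/2}\widehat{f}\big|_\Sigma$.

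The paper avoids this entirely by never interpolating through the family $|x_n|^{it}$ acting on $f$. It introduces the operators $S_w$ built from the difference quotient $Tf$, proves a quadratic bootstrap inequality (Lemma~\ref{iterate1}) bounding $\|\phi S_{u^{a+ib}w}f\|_{H^{-A}}^2$ by $\|\phi\widehat{f}\|_{H^{\ell}}\cdot\sup_w\|\phi S_{u^{2a+1+ib}w}f\|_{H^{-2A-\ell}}+\|f\|_{L^p}^2$, and pairs it with an \emph{a~priori} bound at the top exponent $a=\kappa_p-1$ (Lemma~\ref{apriori1}). The complex interpolation runs between $a=\beta-1$ and $a=\kappa_p-1$ for the \emph{same fixed} $f$, so the surface data $\widehat{f}|_\Sigma$ is constant along the family and no transversal non-locality enters; after closing the bootstrap, parts~(i) and~(ii) drop out simultaneously by choosing $w=(\sgn u)^\beta$ or $w=1$ in $S_{u^{\beta-1}w}$.
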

In particular, we have a continuous analogue of Theorem \ref{GSC113} when $\beta =1$. Moreover, this result can be applied to $n=3$ and $n=4$, which is not covered by Theorem \ref{GSC113}. From Theorem \ref{main1}, the best possible range of $\beta$ is $[0,\frac{1}{2}]$ when $n=3$; when $n=4$, the best range of $\beta$ is $[0, \frac{3}{4}]$; and when $n=5$, the best range of $\beta$ is $[0,1]$ with $p=1$, and we also recover the result in \cite{GoldStol} that the (usual) derivative of $\widehat{f}$ on $\Sigma$ is in $L^2$ if $p=1$ and $\ell >2$ from the second statement.

The following theorem shows that \eqref{N1} and \eqref{N2} are almost sufficient conditions for \eqref{HDR1}. 
\begin{theorem} \label{submain2}
    Let $s$, $\ell$, $\kappa_p$, $\sigma_p$, and $\beta$ be non-negative numbers. Fix $\phi\in C^{\infty}_c(\Sigma)$. If \begin{align}
    \label{N1'} &\beta \leq \min\lbrace s+1, s+\ell\rbrace; \quad  \quad   \beta< \sigma_p; \quad   \quad  2\beta-s \leq \kappa_p;\\
   \label{N2'} &\frac{\beta\ell}{s+\ell-\beta}\leq \kappa_p \quad \text{ if $\beta>s$}
\end{align}
hold, $\frac{2\beta}{s+2-\beta}\leq \kappa_p$ whenever $\kappa_p<2\beta$ and $\beta >s$, and $p>1$, then 
\begin{align*}
    \|\phi (-\Delta_{\xi_n})^{\beta/2}\widehat{f}\|_{H^{-s}(\Sigma)}\lesssim \|f\|_{L^p(\R^n)} + \|\phi \widehat{f}\|_{H^{\ell}(\R^n)}
\end{align*}
holds for all $f\in \mathcal{S}(\R^n)$ that $\widehat{f}|_{\Sigma}\in H^{\ell}(\Sigma)$. In case of $\beta\in \N$, we have 
\begin{align*}
    \bigg\|\phi\frac{\partial^{\beta}\widehat{f}}{\partial \xi_n^{\beta}}\bigg\|_{H^{-s}(\Sigma)}\lesssim \|f\|_{L^p(\R^n)} + \|\phi \widehat{f}\|_{H^{\ell}(\R^n)}
\end{align*}
\end{theorem}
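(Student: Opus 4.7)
The strategy is to reduce Theorem \ref{submain2} to the $s=0$ endpoint of Theorem \ref{submain1} via a duality argument paired with a Littlewood--Paley decomposition on $\Sigma$. The $L^2$-based Bessel potential structure of $H^{-s}(\Sigma)$ gives the duality representation $\|\phi (-\Delta_{\xi_n})^{\beta/2}\widehat{f}\|_{H^{-s}(\Sigma)} = \sup_{\|g\|_{H^s(\Sigma)}\leq 1} \bigl|\langle \phi (-\Delta_{\xi_n})^{\beta/2}\widehat{f}, g\rangle_\Sigma\bigr|$, and viewing $\phi g$ as a density against the surface measure on $\Sigma$, Plancherel in the ambient space converts the pairing to $c_n\int_{\R^n} f(x)\,|x_n|^{\beta}\,\mathcal{F}^{-1}(\phi g\, d\sigma)(x)\,dx$. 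I would then split $g=g_L+g_H$ via a Littlewood--Paley projection on $\Sigma$ at a dyadic scale $R$ to be chosen.

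On $g_L$ (frequencies $\leq R$), H\"older against $f$ in $L^p\times L^{p'}$ bounds the contribution by $\|f\|_{L^p(\R^n)}$ times the weighted $L^{p'}$ norm of $|x_n|^{\beta}\mathcal{F}^{-1}(\phi g_L\,d\sigma)$; this extension-type quantity is controlled via the dual of Theorem \ref{submain1}, yielding a power $R^{a}$ of $\|g\|_{H^s(\Sigma)}$ whose exponent $a=a(\beta,s,p)$ is determined by $\kappa_p$ and $\sigma_p$. On $g_H$ (frequencies $>R$), I would move $(-\Delta_{\xi_n})^{\beta/2}$ across by self-adjointness (modulo commutators with $\phi$ and with the surface projector, treated as acceptable lower-order errors) to produce a pairing against $\phi\widehat{f}$ on $\Sigma$, bounded by $\|\phi\widehat{f}\|_{H^\ell(\Sigma)}$ times a factor that decays like $R^{-b}$ with $b=b(\beta,s,\ell)$. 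Balancing the two regimes by optimizing over $R$ reproduces exactly the balance condition \eqref{N2'}, while the conditions \eqref{N1'} provide the baseline compatibility and the applicability ranges of Theorem \ref{submain1}.

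The auxiliary hypothesis $\frac{2\beta}{s+2-\beta}\leq \kappa_p$ required when $\kappa_p < 2\beta$ and $\beta > s$ arises because the low-frequency estimate above is only directly accessible from Theorem \ref{submain1} when $2\beta\leq\kappa_p$. In the regime $\kappa_p < 2\beta$, I would replace the direct application by an interpolation between a sub-endpoint use of Theorem \ref{submain1} at order strictly less than $\beta$ and the integer-order restriction estimate at order $1$ from Theorem \ref{prop1.1}; the stated inequality $\frac{2\beta}{s+2-\beta}\leq \kappa_p$ is precisely the feasibility condition for that two-sided interpolation. The integer case $\beta\in\N$ claimed at the end of the theorem follows immediately because $(-\Delta_{\xi_n})^{\beta/2}$ then coincides with $\pm\partial_{\xi_n}^{\beta}$ as a local differential operator, so no fractional commutator analysis is required.

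The principal obstacle will be the technical incompatibility between the Littlewood--Paley decomposition on $\Sigma$ and the ambient operator $(-\Delta_{\xi_n})^{\beta/2}$: conjugating surface projectors past the ambient Laplacian, and past the smooth cutoff $\phi$, produces commutator terms whose contributions must be shown to be absorbable into either $\|f\|_{L^p}$ or $\|\phi\widehat{f}\|_{H^\ell(\Sigma)}$. A secondary difficulty is the borderline case $\beta=s$, where the dichotomy between the $\beta\leq s$ and $\beta>s$ regimes collapses and must be handled by a limiting argument using continuous dependence of the exponents.
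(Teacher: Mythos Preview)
Your high-frequency step contains a genuine gap. You propose to ``move $(-\Delta_{\xi_n})^{\beta/2}$ across by self-adjointness'' so as to produce a pairing against $\phi\widehat{f}$ on $\Sigma$, bounded by $\|\phi\widehat{f}\|_{H^{\ell}(\Sigma)}$. But $(-\Delta_{\xi_n})^{\beta/2}$ acts non-locally in the \emph{transversal} variable $\xi_n$; once one restricts to $\Sigma$ there is no transversal variable left for the adjoint to land on, and for non-integer $\beta$ the distribution $(-\Delta_{\xi_n})^{\beta/2}(\phi g_H\,d\sigma)$ is not supported on, nor concentrated near, $\Sigma$. The commutators you flag are therefore not lower-order corrections but the entire main term: surface-side frequency localization of $g$ gives no mechanism for converting a transversal fractional derivative of $\widehat{f}$ into information about $\widehat{f}|_\Sigma$. (Your remark about integer $\beta$ is also inaccurate: for odd $\beta$, $(-\Delta_{\xi_n})^{\beta/2}$ is the Hilbert transform of $\partial_{\xi_n}^{\beta}$, not $\pm\partial_{\xi_n}^{\beta}$, so the non-locality persists.) Your low-frequency input is also problematic: Theorem~\ref{submain1} has a two-term right-hand side and hence no clean dual, and in any case its hypotheses force $\beta\le 1$ and $2\beta\le\kappa_p$, so it cannot serve as the basic building block throughout the range $0\le\beta\le\kappa_p$ that Theorem~\ref{submain2} covers.

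The paper proceeds quite differently. It first reduces Theorem~\ref{submain2} to Theorem~\ref{main2} for the operators $S_w$, using that $[(-\Delta_{\xi_n})^{\beta/2}\widehat{f}]|_\Sigma \simeq S_{u^{\beta-1}w}f$ for a specific bounded $w$. The quantity $\|\phi\widehat{f}\|_{H^{\ell}(\Sigma)}$ is then produced not by moving derivatives onto a test function but by a polarization identity in the kernel of $S_w^\ast S_w$: writing $W(x_n)\overline{W(y_n)}=\tfrac12\big(|W(x_n)|^2+|W(y_n)|^2\big)-\tfrac12|W(x_n)-W(y_n)|^2$, the symmetric part becomes $\langle \phi\,\widehat{|W|^2 f},\,\phi\widehat{f}\rangle$ and yields the factor $\|\phi\widehat{f}\|_{H^{\ell}}$, while the difference is controlled by Lemma~\ref{useful1}. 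This gives the recursion of Lemma~\ref{iterate1}, relating $\|\phi S_{u^{a}w}f\|_{H^{-A}}$ to $\|\phi S_{u^{2a+1}w}f\|_{H^{-2A-\ell}}$; combined with the endpoint Lemma~\ref{apriori1} and complex interpolation in the exponent of $u$, this closes the estimate and yields Theorems~\ref{main1}, \ref{1tokp}, \ref{kptokp1}, \ref{kptokp2}, from which Theorem~\ref{main2} is assembled case by case. Finally, the auxiliary hypothesis $\tfrac{2\beta}{s+2-\beta}\le\kappa_p$ does not come from interpolation against Theorem~\ref{prop1.1} as you suggest; it is forced by the constraint $A\ge a$ in Lemma~\ref{iterate1}, which caps $\ell$ at $2$ in the application at $a=\tfrac{\kappa_p}{2}-1$ and leads to Theorems~\ref{kptokp1}--\ref{kptokp2}.
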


While the results in Theorems \ref{submain1} and \ref{submain2} provide bounds on the derivatives of $\hat{f}$, they are largely derived by analysis of the associated difference quotient 
$$Tf(\xi,\eta) = \frac{\widehat{f}(\xi, h(\xi)+\eta) - \widehat{f}(\xi, h(\xi))}{\eta}, $$ 
$\xi\in U\subset \R^{n-1}$, and $\eta\in \R\setminus\lbrace 0\rbrace$. 

The first author proved $L^p \to L^2$ estimates for $T$ under the assumption that $\widehat{f} \equiv 0$ on the sphere in \cite{G2016}. Here we extend the bound to all cases where the restriction of $\widehat{f}$ to $\Sigma$ is sufficiently smooth.  


\begin{corollary} \label{coro1}
    Suppose $f\in L^p(\R^n)$ has $\widehat{f}|_{\Sigma}\in H^{\ell}(\Sigma)$, and $\phi\in C^{\infty}_c(\Sigma)$. If $1<p\leq \frac{2n+2}{n+5}$ and $\ell\geq \frac{\kappa_p}{2\kappa_p-1}$, then 
$$\|\phi Tf\|_{L^2_{\xi}L^2_{\eta}}\lesssim \|f\|_{L^p(\R^n)}+\|\phi \widehat{f}\|_{H^{\ell}(\Sigma)}.$$

The above estimate is also true for $p=1$ if $\ell > \frac{\kappa_1}{2\kappa_1-1}=\frac{n-1}{2n-4}.$
\end{corollary}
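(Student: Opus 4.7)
The plan is to derive Corollary~\ref{coro1} as a specialization of Theorem~\ref{submain1} at $\beta = 1/2$. With this choice the numerical hypotheses line up exactly: the condition $1 \leq \kappa_p$ is equivalent to $p \leq \frac{2n+2}{n+5}$, the threshold $\ell \geq \frac{\kappa_p \beta}{\kappa_p - \beta}$ collapses to $\ell \geq \frac{\kappa_p}{2\kappa_p - 1}$, and $0 \leq \beta \leq 1$ together with $2\beta < \sigma_p$ hold throughout the stated range. The $p = 1$ case is handled identically using the strict-inequality variant of Theorem~\ref{submain1}.

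The key analytic step is a pointwise Plancherel identity in $\eta$. For fixed $\xi$, set $g(\eta) := \widehat f(\xi, h(\xi)+\eta)$ and invoke $\int_\R (e^{-2\pi i \eta a}-1)/\eta^2\,d\eta = -2\pi^2|a|$ (verified by inverting the principal-value Fourier transform of $1/\eta$) to obtain
\[
\int_\R |Tf(\xi,\eta)|^2\,d\eta = 4\pi^2 \iint_{xy > 0} \widehat g(x)\overline{\widehat g(y)}\min(|x|,|y|)\,dx\,dy.
\]
Using the layer-cake representation $\min(|x|,|y|) = \int_0^\infty \mathbf 1_{|x|>t}\mathbf 1_{|y|>t}\,dt$ together with the identification of the $1$D Fourier transform of $g$ as a modulation of the partial Fourier transform of $f$ in the $x'$ variables, this collapses to
\[
\int_\R |Tf(\xi,\eta)|^2\,d\eta = 4\pi^2 \int_0^\infty \bigl[\,|\widehat{f\mathbf 1_{x_n > t}}(\xi,h(\xi))|^2 + |\widehat{f\mathbf 1_{x_n < -t}}(\xi,h(\xi))|^2\,\bigr]\,dt.
\]

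Integrating against $|\phi(\xi)|^2$ reduces the corollary to estimating $\int_0^\infty \|\phi\,\widehat{f\mathbf 1_{\pm x_n > t}}\|_{L^2(\Sigma)}^2\,dt$ by $\|f\|_{L^p}^2 + \|\phi \widehat f\|_{H^\ell(\Sigma)}^2$. Via the representation $|x_n|^{1/2} = \tfrac12\int_0^\infty t^{-1/2}\mathbf 1_{|x_n|>t}\,dt$, the tail transforms $\widehat{f\mathbf 1_{\pm x_n > t}}|_\Sigma$ are $t^{-1/2}$-dual to the half-Laplacian $(-\Delta_{\xi_n})^{1/4}\widehat f = c_n \widehat{|x_n|^{1/2}f}$ on $\Sigma$. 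A duality argument exploiting the oscillatory cancellation of the extension operator associated to $\Sigma$ (the Tomas--Stein-type gain) then supplies
\[
\int_0^\infty \|\phi\,\widehat{f\mathbf 1_{\pm x_n > t}}\|_{L^2(\Sigma)}^2\,dt \;\lesssim\; \|\phi\,(-\Delta_{\xi_n})^{1/4}\widehat f\|_{L^2(\Sigma)}^2 + \|\phi \widehat f\|_{L^2(\Sigma)}^2,
\]
and Theorem~\ref{submain1} bounds the first term by the desired right-hand side; the boundary term (arising from $g(0) = \widehat f(\xi,h(\xi))$) is absorbed via $H^\ell(\Sigma)\hookrightarrow L^2(\Sigma)$.

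The principal obstacle is this last duality step: a naive Cauchy--Schwarz in $t$ produces a divergent weight, and one must genuinely use the oscillatory decay along $\Sigma$ to match the $t^{-1/2}$-weighted tail transforms with the half-derivative on the surface rather than with an $L^2$ norm over a slab in $\R^n$, where the corresponding estimate fails.
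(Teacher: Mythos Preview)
Your Plancherel computation and the resulting identity
\[
\|\phi Tf\|_{L^2_\xi L^2_\eta}^2 \;=\; 4\pi^2 \int_0^\infty \Big[\|\phi\,\widehat{f\mathbf 1_{x_n>t}}\|_{L^2(\Sigma)}^2 + \|\phi\,\widehat{f\mathbf 1_{x_n<-t}}\|_{L^2(\Sigma)}^2\Big]\,dt
\]
are correct; this is exactly the bilinear-kernel calculation the paper performs, expressed via the layer-cake decomposition of $s_{\min}(x_n,y_n)=\chi_{\{x_ny_n>0\}}\min(|x_n|,|y_n|)$. The genuine gap is the step you yourself flag as the ``principal obstacle'': the displayed inequality
\[
\int_0^\infty \|\phi\,\widehat{f\mathbf 1_{\pm x_n>t}}\|_{L^2(\Sigma)}^2\,dt \;\lesssim\; \|\phi(-\Delta_{\xi_n})^{1/4}\widehat f\|_{L^2(\Sigma)}^2 + \|\phi\widehat f\|_{L^2(\Sigma)}^2
\]
is asserted, not proved. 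The layer-cake formula $|x_n|^{1/2}=\tfrac12\int_0^\infty t^{-1/2}\mathbf 1_{|x_n|>t}\,dt$ lets you write $\widehat{|x_n|^{1/2}f}|_\Sigma$ as a $t^{-1/2}$-weighted integral of the tail restrictions, but this goes the \emph{wrong way}: it expresses the right side in terms of the left, and passing from $\|\int t^{-1/2}G_t\,dt\|_{L^2(\Sigma)}^2$ back to $\int\|G_t\|_{L^2(\Sigma)}^2\,dt$ is not a duality in any standard sense. In kernel language you are asking that the quadratic form with kernel $s_{\min}(x_n,y_n)\,\widecheck{\phi^2}_{0,\Sigma}$ be dominated by the one with kernel $|x_n|^{1/2}|y_n|^{1/2}\,\widecheck{\phi^2}_{0,\Sigma}$; the pointwise inequality $s_{\min}\le |x_ny_n|^{1/2}$ does not yield this (neither kernel is a pointwise-positive multiple of a positive-definite one), and the cross-sign terms on the right can be negative. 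Invoking ``oscillatory decay along $\Sigma$'' here is a placeholder, not an argument, and this step is precisely the content of the corollary after your reduction.

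The paper does \emph{not} pass through $\|\phi(-\Delta_{\xi_n})^{1/4}\widehat f\|_{L^2(\Sigma)}$ at all. Instead it splits the kernel as
\[
s_{\min}(x_n,y_n)=\tfrac12(|x_n|+|y_n|)\;-\;R(x_n,y_n),\qquad |R(x_n,y_n)|\lesssim |x_n-y_n|,
\]
so that the $R$-piece is bounded by $\|f\|_{L^p}^2$ directly via the dispersive estimate for $\widecheck{\phi^2}_{0,\Sigma}$ and Hardy--Littlewood--Sobolev (this is where the curvature is used), while the $(|x_n|+|y_n|)/2$ piece is exactly $\operatorname{Re}\langle \phi\,\widehat{|x_n|f},\,\phi\widehat f\rangle_{L^2(\Sigma)}$, which is then estimated by the $H^{-\ell}\times H^{\ell}$ pairing and inequality~\eqref{beta-1} with $\beta=\tfrac12$ (i.e.\ the bound $\|\phi\,\widehat{|x_n|f}\|_{H^{-\ell}(\Sigma)}\lesssim \|f\|_{L^p}+\|\phi\widehat f\|_{H^\ell}$). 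Note that this uses the $H^{-\ell}$ estimate for $\widehat{|x_n|f}$, not the $L^2$ estimate for $\widehat{|x_n|^{1/2}f}$ from Theorem~\ref{submain1}; the former pairs naturally with the hypothesis $\phi\widehat f\in H^\ell(\Sigma)$, whereas your route leaves you with a squared $L^2(\Sigma)$-norm that has no second factor to dualize against.
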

The condition $\ell  \geq \frac{\kappa_p}{2\kappa_p - 1}$ is necessary due to ``shifted Knapp" counterexamples identical to those in \cite{GoldStol}.

In fact, to derive the bounds for the derivatives of $\widehat{f}$, we consider an operator using $T$, namely,
\begin{align} \label{S_wf}
    S_wf(\xi)&= \int_{\R} w(r)\mathscr{F}_{\eta\rightarrow r}(Tf(\xi,\cdot))(-r)dr = \int_{\R} w(-r)\mathscr{F}_{\eta\rightarrow r}(Tf(\xi,\cdot))(r)dr 
\end{align}
The term $-r$ is to eliminate the negative sign obtained from the Fourier transform, and $\mathscr{F}_{\eta\rightarrow r}=\mathscr{F}_{\eta}$ denotes the one dimensional Fourier transform from $\eta$-variable to $r$-variable.

With $S_w$, we are able to prove the following theorems.
\begin{theorem} \label{main1}
       Suppose $f\in L^p(\R^n)$ has $\widehat{f}|_{\Sigma}\in H^{\ell}(\Sigma)$, and $\phi\in C^{\infty}_c(\Sigma)$. Under the hypothesis in Theorem \ref{submain1} with $\beta\neq 0$, for all $b\in\R$ we have
       $$  \sup_{\|w\|_{L^{\infty}}\leq 1}\|\phi S_{u^{2\beta-1+ib}w}f\|_{L^2(\Sigma)} \lesssim \|f\|_{L^p(\R^n)}+\|\phi\widehat{f}\|_{H^{\ell}(\Sigma)}.$$
\end{theorem}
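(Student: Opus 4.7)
The plan is to reduce the bound to a Fourier-restriction estimate via an explicit integral representation for $S_w f$, and then to apply Stein's complex interpolation theorem with Corollary~\ref{coro1} supplying one endpoint and the Tomas--Stein inequality the other.

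I would first establish the identity
$$S_w f(\xi) \;=\; -2\pi i\,\widehat{W(x_n)\,f}\bigl(\xi,h(\xi)\bigr),\qquad W(x_n):=\int_{0}^{x_n} w(u)\,du,$$
by Fubini combined with the distributional Fourier transform $\mathscr{F}_{\eta\to r}[1/\eta]=-i\pi\,\sgn(r)$. Substituting $w(u)=|u|^{2\beta-1+ib}w_0(u)$ with $\|w_0\|_{\infty}\le 1$ and rescaling $u=x_n t$ produces the factorization $W(x_n)=\sgn(x_n)\,|x_n|^{2\beta+ib}M_b(x_n)$, where $|M_b(x_n)|\le 1/(2\beta)$ uniformly in $b$ and $x_n$. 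Proving Theorem~\ref{main1} thus reduces to the uniform $L^2(\Sigma)$-bound for the Fourier restriction of $m(x_n)\,|x_n|^{2\beta}f(x)$, taken over all complex-valued multipliers $m$ of modulus $\le 1$.

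To establish this bound I would embed the problem in the analytic family of operators $\mathcal{A}_z f(\xi):=\phi(\xi)\,\widehat{m(x_n)\,|x_n|^{z+1}\,f}\bigl(\xi,h(\xi)\bigr)$ over a vertical strip $-1+\varepsilon\le\text{Re}(z)\le\kappa_p-1$ and apply Stein interpolation. At the lower endpoint $\text{Re}(z)=-1+\varepsilon$ the weight $|x_n|^{\varepsilon}$ is essentially bounded, so the Tomas--Stein inequality (applicable since $\beta<\sigma_p/2$ places $p$ in the admissible range) delivers a bound by $\|f\|_{L^p}$ alone. At the upper endpoint $\text{Re}(z)=\kappa_p-1$ one runs the explicit formula in reverse to interpret $\mathcal{A}_z f$ as $S_{u^{\kappa_p-1+ib}w_0}f$, and invokes Corollary~\ref{coro1} combined with Plancherel in $\eta$ to produce a bound by $\|f\|_{L^p}+\|\phi\widehat{f}\|_{H^{\ell_1}}$ for the maximal Sobolev exponent $\ell_1$ admissible there. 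Interpolation then yields the desired bound at $\text{Re}(z)=2\beta-1$, with the convex combination of Sobolev exponents producing precisely the threshold $\ell\ge\kappa_p\beta/(\kappa_p-\beta)$ in the hypothesis.

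The main obstacle is obtaining polynomial-in-$|b|$ control of the operator norms at both endpoints, as required by Stein's theorem. This reduces to Gamma-function asymptotics for the Fourier transform of $|r|^{z}$ and uniform-in-$b$ bounds on the rescaling factor $M_b$, both standard but requiring careful bookkeeping. A secondary delicate point is the mild singularity of $|x_n|^{\varepsilon}$ near $x_n=0$ at the low endpoint, which is handled by a cutoff argument once $p$ lies in the Tomas--Stein range; passing to $\varepsilon\downarrow 0$ then recovers the stated bound uniformly in $b$.
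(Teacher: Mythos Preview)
There are two genuine gaps. The first is circularity: Corollary~\ref{coro1} is established in Section~\ref{proofcoro} \emph{using} inequality~\eqref{beta-1}, which is itself the heart of the proof of Theorem~\ref{main1}, so it cannot serve as an input here. The second is that, even granting an $L^2_\xi L^2_\eta$ bound on $\phi Tf$, you do not recover the $L^2(\Sigma)$ bound on $\phi S_{u^{\kappa_p-1+ib}w_0}f$ that your upper endpoint requires: since $S_{u^{\kappa_p-1+ib}w_0}f(\xi)=\int_{\R} r^{\kappa_p-1+ib}w_0(r)\,\mathscr{F}_\eta(Tf(\xi,\cdot))(-r)\,dr$ and $|r|^{\kappa_p-1}\notin L^2(\R)$, Plancherel in $\eta$ does not close. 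The only a~priori estimate available at exponent $\kappa_p-1$ is Lemma~\ref{apriori1}, and it lands in $H^{-\kappa_p}(\Sigma)$. Interpolating that against Tomas--Stein produces bounds into $H^{-s}(\Sigma)$ with $s\ge 2\beta$ (this is exactly Theorem~\ref{prop1.1}), never $s=0$.

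What your outline is missing is the mechanism that converts the extra hypothesis $\phi\widehat f\in H^\ell(\Sigma)$ into a gain on the target Sobolev index. In the paper this is Lemma~\ref{iterate1}: the kernel of $S^*_{u^{a+ib}w}S_{u^{a+ib}w}$ is split via the identity $W(x_n)\overline{W(y_n)}=\tfrac12(|W(x_n)|^2+|W(y_n)|^2)-\tfrac12|W(x_n)-W(y_n)|^2$; the diagonal piece pairs $\phi\widehat f\in H^\ell$ against $\phi S_{u^{2a+1+ib}\widetilde w}f\in H^{-2A-\ell}$ by duality, while the difference term is bounded by $\|f\|_{L^p}^2$ through Lemma~\ref{useful1}. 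This produces an inequality relating the square of the $H^{-A}$ norm at exponent $a$ to the $H^{-2A-\ell}$ norm at exponent $2a+1$, which is then interpolated against Lemma~\ref{apriori1} and closed by a bootstrap (see~\eqref{itereq}--\eqref{beta/2-1}). Your scheme contains no analogue of this step; once the circular appeal to Corollary~\ref{coro1} is removed, the hypothesis $\widehat f|_\Sigma\in H^\ell$ plays no role at all, and a straight Stein interpolation between two $\|f\|_{L^p}$-only endpoints cannot reach $L^2(\Sigma)$. (A smaller issue: at your lower endpoint $|x_n|^\varepsilon$ is unbounded as $|x_n|\to\infty$, not near $0$, so $|x_n|^\varepsilon f\notin L^p$ in general; this is fixable by taking $\operatorname{Re}z=-1$, but the gaps above are not.)
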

\begin{theorem} \label{main2}
Let $s$, $\ell$, $\kappa_p$, and $\sigma_p$ be non-negative numbers and $\beta >0$. Fix $\phi\in C^{\infty}_c(\Sigma)$. Under the same hypothesis in Theorem \ref{submain2},  for all $b\in\R$ we have
\begin{align} \label{HDR2}\sup_{\|w\|_{L^{\infty}}\leq 1}\|\phi S_{u^{2\beta-1+ib}w}f\|_{H^{-s}(\Sigma)} \lesssim \|f\|_{L^p(\R^n)}+\|\phi\widehat{f}\|_{H^{\ell}(\Sigma)}
\end{align} 
whenever  $f\in L^p(\R^n)$ with $\widehat{f}|_{\Sigma}\in H^{\ell}(\Sigma)$.
\end{theorem}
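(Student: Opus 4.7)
The plan is to deduce Theorem~\ref{main2} from Theorem~\ref{main1} via Stein's analytic interpolation theorem. The complex parameter $b$ appearing in the weight $u^{2\beta-1+ib}$ is the natural signal of this strategy: embed $S_{u^{2\beta-1+ib}w}$ into the analytic family $z\mapsto T_z f \defeq S_{u^{2z-1+ib}w}f$ for $z$ in a vertical complex strip. On each vertical line $\Re z=\alpha$ the modulus $|u^{2z-1+ib}|=|u|^{2\alpha-1}$ is independent of $\Im z$, and the family grows only polynomially in $|\Im z|$, so it is admissible in Stein's sense.

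\textbf{Endpoints.} At $\Re z=\beta_0$ for some $\beta_0\in(0,1]$ lying in the range of Theorem~\ref{submain1}, Theorem~\ref{main1} directly supplies the $L^2(\Sigma)=H^0(\Sigma)$ bound
\[
\sup_{\|w\|_{L^\infty}\le 1}\|\phi T_zf\|_{L^2(\Sigma)}\lesssim \|f\|_{L^p(\R^n)}+\|\phi\widehat{f}\|_{H^\ell(\Sigma)}.
\]
At $\Re z=k$ for a positive integer $k$ I would use integration by parts in $\eta$ together with the identity $Tf(\xi,\eta)=\eta^{-1}\bigl(\widehat{f}(\xi,h(\xi)+\eta)-\widehat{f}(\xi,h(\xi))\bigr)$ to rewrite $T_zf$ as a bounded operator applied to $\partial_{\xi_n}^{k}\widehat{f}\big|_\Sigma$, modulo lower-order terms absorbed into the $\|\phi\widehat{f}\|_{H^\ell(\Sigma)}$ piece. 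Theorem~\ref{prop1.1} then yields an $H^{-s_1}(\Sigma)$-bound for a suitable $s_1\geq s$ meeting its constraints.

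\textbf{Interpolation and main obstacle.} Stein interpolation between these two vertical lines, with output Sobolev index interpolating linearly between $0$ and $s_1$, produces the claimed $H^{-s}(\Sigma)$-bound at $\Re z=\beta$ as soon as $(\beta,s)$ lies on the segment joining $(\beta_0,0)$ and $(k,s_1)$. A bookkeeping check should verify that conditions \eqref{N1'} and \eqref{N2'}, together with the extra hypothesis $\frac{2\beta}{s+2-\beta}\le\kappa_p$ when $\kappa_p<2\beta$ and $\beta>s$, are exactly what is needed to find admissible triples $(\beta_0,k,s_1)$ for this interpolation. The principal difficulty lies at the integer endpoint: because $|r|^{2k-1}$ is an odd-order singular multiplier rather than a polynomial, identifying $T_zf$ at $\Re z=k$ with a derivative-controlled operator requires a delicate Fourier-side argument (likely via $|r|^{2k-1}=(\sgn r)\,r^{2k-1}$ and a Riesz-transform-type identity) together with careful tracking of the $|\Im z|$-dependence of the implicit constants so that Stein's admissibility hypothesis continues to hold. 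I expect the extra condition $\frac{2\beta}{s+2-\beta}\le\kappa_p$ to arise precisely from the constraint that the interpolation segment remain inside the admissible region of Theorem~\ref{prop1.1} near the integer endpoint.
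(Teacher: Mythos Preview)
Your interpolation instinct is correct, but the execution has two concrete problems, and the overall architecture misses the paper's essential mechanism.

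\textbf{Wrong right endpoint.} You propose an integer endpoint $\Re z=k$ and an appeal to Theorem~\ref{prop1.1}. First, there is an index slip: since $W(x_n)=\int u^{2k-1}w(u)\sqrt{\pi}1_{x_n}(u)\,du\sim |x_n|^{2k}$, the operator $S_{u^{2k-1+ib}w}$ corresponds to a $2k$-th order object $(-\Delta_{\xi_n})^{k}\widehat{f}|_\Sigma$, not $\partial_{\xi_n}^{k}\widehat{f}|_\Sigma$; your ``integration by parts in $\eta$'' does not produce the $k$-th derivative. Second, even after correcting this, Theorem~\ref{prop1.1} forces the right endpoint to lie on or above the diagonal (order $\le s_1$), and restricting to integer $k$ is gratuitous. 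The paper's right endpoint is the non-integer point given by Lemma~\ref{apriori1},
\[
\sup_{\|w\|_{L^\infty}\le 1}\|\phi S_{u^{\kappa_p-1+ib}w}f\|_{H^{-\kappa_p}(\Sigma)}\lesssim\|f\|_{L^p(\R^n)},
\]
which is exactly the fractional version of Theorem~\ref{prop1.1} at the critical exponent; there is no reason to round to an integer, and doing so loses the sharp boundary.

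\textbf{A single interpolation does not cover the region.} Interpolating once between Theorem~\ref{main1} at $\beta_0\le 1$ and \emph{any} admissible right endpoint produces only one line segment in the $(\beta,s)$ plane. The region described by \eqref{N1'}--\eqref{N2'} (plus the extra condition) is bounded below by a \emph{piecewise-linear} curve (cf.\ the figures): a horizontal piece $s=0$, then $s=\beta-1$, then $s=\frac{\ell\beta+\beta\kappa_p-\ell\kappa_p}{\kappa_p}$ or $s=\frac{2\beta+\beta\kappa_p-2\kappa_p}{\kappa_p}$, then $s=2\beta-\kappa_p$. To reach the corners of this polygon the paper first bootstraps Lemma~\ref{iterate1} with various choices of $(a,A)$ to produce the intermediate estimates of Theorems~\ref{1tokp}, \ref{kptokp1}, \ref{kptokp2} (e.g.\ the point $(\tfrac{\kappa_p}{2},\tfrac{\kappa_p}{2}-1)$ when $\kappa_p\ge 2$), and only then interpolates between adjacent corners. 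Without this iteration step you cannot get below the line joining $(\beta_0,0)$ to $(\kappa_p,\kappa_p)$ for $\beta\in[1,\kappa_p]$, so in particular you would miss the segment $s=\beta-1$ for $1\le\beta\le\kappa_p/2$. The extra hypothesis $\frac{2\beta}{s+2-\beta}\le\kappa_p$ does not come from Theorem~\ref{prop1.1} at all; it reflects the limitation $A\ge a$ in Lemma~\ref{iterate1}, which caps the gain at one step of bootstrapping and forces the segment with slope $\frac{\kappa_p+2}{\kappa_p}$ in Theorem~\ref{kptokp2}.

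In short: keep the analytic-family idea, but replace your integer endpoint by Lemma~\ref{apriori1}, and recognize that a second ingredient---the iteration Lemma~\ref{iterate1} applied at several scales, followed by case analysis---is what actually fills the polygon.
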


We remark that the implicit constants depend on $s$, $\ell$, $\kappa_p$, $\sigma_p$, and $\beta$, but they are independent of $b$. Moreover, it suffices to consider $f\in \mathcal{S}(\R^n)$, the space of Schwartz functions on $\R^n$, thanks to a density lemma \cite{GoldStol}*{Lemma 3.10}.

Theorems \ref{submain1} and \ref{submain2} follow from Theorems \ref{main1} and \ref{main2} by choosing $w=(\sgn(u))^{\beta}$ and $w=1$. This is because one can show that
\begin{align*} 
&S_{i(\sgn(u)u)^{\beta-1}}f(\xi)\nonumber\\
&= 2 \pi^{3/2}\beta^{-1} \int_{\R^n} |x_n|^{\beta}e^{-2\pi i (x'\cdot \xi+x_nh(\xi))}f(x',x_n)dx'dx_n \simeq [(-\Delta_{\xi_n})^{\frac{\beta}{2}}\widehat{f}]|_{\Sigma}(\xi)
\end{align*}
and similarly one has $S_{iu^{\beta-1}}f \simeq [x_n^{\beta}f]\widehat{\ }|_{\Sigma}(\xi)$ if $\beta \in \N$. From this calculation, we have to exclude the case $\beta =0$ in Theorem \ref{main1}, but we can include $\beta =0$ in Theorem \ref{submain1} because it follows from the hypothesis.


The organization of the reminder of this paper is as follows. In Section \ref{Pre}, we state the assumptions on $h$, provide definitions of different types of Sobolev spaces, and compute the kernel of $S_w$. In Section \ref{mainproof}, the proofs of Theorems \ref{main1} and \ref{main2} are shown. The proof of Corollary \ref{coro1} is given in Section \ref{proofcoro}. In Section \ref{OE}, Corollary \ref{coro1} for a variation of $T$ is given there.

\section{Preliminary} \label{Pre}
\subsection{Notations} \label{notations}
In this paper, we assume that $\Sigma$ is a graph of a function $h$ defined on $\R^{n-1}$.  Let $U$ be a neighborhood of the origin in $\R^{n-1}$. We assume that the function $h \in C^{\infty}(U)$ such that $h(0)=0$, $\nabla h(0)=0$,  $\det(\frac{\partial^2 h}{\partial \xi^2}(0))\neq 0$, and the second differential of $h$ in $U$ is sufficiently close to the one at $0$.

Since the roles of $\xi$ and $\eta$ are different, we consider $Tf$ in anisotropic Sobolev (Bessel potential) spaces. To be more precise, if $g$ is a function defined on $\R^{n-1}\times \R$ and $\gamma, \tau\in \R$, then we define 
$$\|g(\xi,\eta)\|_{H^{\gamma}_{\xi}H^{\tau}_{\eta}}^2:= \int_{\R^{n-1}}\int_{\R} (1+|x|)^{2\gamma}(1+|r|)^{2\tau}|\mathscr{F}_{(\xi,\eta)\rightarrow(x,r)}g(x,r)|^2dxdr,$$
where $\mathscr{F}_{(\xi,\eta)\rightarrow(x,r)}$ denotes the Fourier transform from $(\xi,\eta)$-variable to $(x,r)$-variable. 
Moreover, we will consider negative homogeneous Sobolev spaces in $\xi$-variable and inhomogeneous Sobolev spaces in $\eta$, and the inner product of such spaces are given by 
$$ \lan g_1, g_2\ran_{\dot{H}^{-\gamma}_{\xi}H^{\tau}_{\eta}}=\int_{\R}\int_{U}\int_{U}\mathscr{F}_{\eta\rightarrow r}g_1(\xi,r)\mathscr{F}_{\eta\rightarrow r}\overline{g_2(\zeta, r)} |\xi-\zeta|^{2\gamma-n+1}(1+|r|)^{2\tau} d\xi d\zeta dr$$
if $\gamma\in (0,\frac{n-1}{2}).$
We have used the fact that the negative homogeneous Sobolev space has a norm given by 
$$\|\psi\|_{\dot{H}^{-\gamma}(\Sigma)}=C_{n,\gamma} \int_{U\times U} \psi(\xi, h(\xi))\overline{\psi(\zeta, h(\zeta))}|\xi-\zeta|^{2\gamma-n+1}d\xi d\zeta$$
provided that $\gamma \in (0,\frac{n-1}{2}).$

On the other hand, we also assume that $f\in L^p(\R^n)$ and $\widehat{f}|_{\Sigma}\in H^{\ell}(\Sigma)$, where the norm of $H^{\ell}(\Sigma)$ is given by 
$$\|\psi\|_{H^{\ell}(\Sigma)}^2:= \int_{\R^{n-1}}\big|\mathscr{F}_{\xi\rightarrow z}[\psi(\xi, h(\xi))](z)\big|^2(1+|z|)^{2\ell}dz.$$

In this paper, the implicit constant in $\lesssim$ may vary from line to line.

Before we move on to calculating the kernel of $S^*_wS_w$, we shall recall the notations of some useful exponents in this paper:
\begin{align*} 
   \sigma_p &:= \frac{n}{p} -\frac{n+1}{2};\\
    \kappa_p &:= \frac{n+1}{p} -\frac{n+3}{2}.
\end{align*}
\subsection{Computing the kernel of $S^*_wS_w$}
Let us recall $S_w$, where $w$ is a complex-valued bounded measurable function. 
\begin{align*} 
    S_wf(\xi)&= \int_{\R} w(r)\mathscr{F}_{\eta\rightarrow r}(Tf(\xi,\cdot))(-r)dr = \int_{\R} w(-r)\mathscr{F}_{\eta\rightarrow r}(Tf(\xi,\cdot))(r)dr 
\end{align*}

We first compute the Fourier transform of $T$. To do so, for $f\in \mathcal{S}(\R^n)$, 
\begin{align*}
     \mathscr{F}_{\eta}(Tf(\xi, \cdot))( -r) &= \lim_{\varepsilon\rightarrow 0^+} \int_{-\infty}^{\infty} \int_{\R^n} e^{-\pi^2\varepsilon \eta^2} e^{2\pi i\eta r}  \bigg(\frac{e^{-2\pi i x_n \eta}-1}{\eta}\bigg)  e^{-2\pi i (x'\cdot \xi + x_n h(\xi))} f(x',x_n)dx'dx_n d\eta \\
    &= \lim_{\varepsilon\rightarrow 0^+}  2\pi i \int_{\R^n} E_{x_n,\varepsilon}(r)  e^{-2\pi i (x'\cdot \xi + x_n h(\xi))} f(x',x_n)dx'dx_n,
\end{align*}
where 
$$E_{x_n,\varepsilon}(r):= \frac{1}{\sqrt{\varepsilon}}\int_{-x_n}^{0} e^{-\frac{(-r-y)^2}{\varepsilon}}dy= \int_{-\frac{x_n+r}{\sqrt{\varepsilon}}}^{-\frac{r}{\sqrt{\varepsilon}}}e^{-t^2}dt.$$
Note that as $\varepsilon \rightarrow 0^+$, we have 
$$s_{r}(x_n)=\sqrt{\pi}1_{x_n}(r):=\lim_{\varepsilon\rightarrow 0^+} E_{x_n,\varepsilon}(r)=\begin{cases}
    \sqrt{\pi}, &\quad \text{if }0<r<x_n;\\
    -\sqrt{\pi}, &\quad \text{if }x_n<r<0;\\
    -\sgn(x_n)\frac{\sqrt{\pi}}{2} &\quad \text{if } r=0;\\
    0, &\quad \text{otherwise}.
\end{cases}$$
Therefore, using Dominated Convergence Theorem, we have 
\begin{align*}
    \mathscr{F}_{\eta}(Tf(\xi, \cdot))( -r) &=   -2\pi i \int_{\R^n} s_r(x_n)  e^{-2\pi i (x'\cdot \xi + x_n h(\xi))} f(x',x_n)dx'dx_n.
\end{align*}

We are now able to compute the kernel of $S_w^*S_w$. For $\ell\in (0,\frac{n-1}{2})$, we use homogeneous norm, and have
\begin{align*}
    &\lan \phi S_w f,\phi S_w g\ran_{\dot{H}^{-\ell}}\\
    &\simeq  \int_{U}\int_{U} \phi(\xi) S_wf(\xi)\overline{\phi(\zeta) S_wg(\zeta)}|\xi-\zeta|^{2\ell-n+1}d\xi d\zeta \\
    &\simeq  \int_{U\times U}\int_{\R\times \R}   \phi(\xi) w(r) \mathscr{F}_{\eta\rightarrow r}(Tf(\xi,\cdot))(-r) \overline{ \phi(\zeta) w(r') \mathscr{F}_{\eta\rightarrow r}(Tg(\zeta,\cdot))(-r')}|\xi-\zeta|^{2\ell-n+1} drdr'd\xi d\zeta\\
    &\simeq  \int_{U\times U}\int_{\R\times \R} \phi(\xi)\overline{ \phi(\zeta)} w(r)\overline{w(r')} \bigg(\int_{\R^n} s_{r}(x_n)  e^{-2\pi i (x'\cdot \xi + x_n h(\xi))} f(x',x_n)dx'dx_n\bigg) \\
    &\quad\quad\quad \times \bigg(\int_{\R^n} s_{r}(y_n)  e^{2\pi i (y'\cdot \zeta + y_n h(\zeta))} \overline{g(y',y_n)}dy'dy_n\bigg) |\xi-\zeta|^{2\ell-n+1} drdr'd\xi d\zeta\\
    &\simeq \int_{\R^n\times \R^n} \bigg(\int_{U\times U}e^{-2\pi i (x'\cdot \xi + x_n h(\xi))} e^{2\pi i (y'\cdot \zeta + y_n h(\zeta))} \phi(\xi)\overline{\phi(\zeta)} |\xi-\zeta|^{2\ell-n+1}  d\zeta d\xi\bigg) \\
    &\quad\quad\quad \times \bigg(\int_{\R}w(r) \sqrt{\pi}1_{x_n}(r)dr\bigg)\bigg(\int_{\R}\overline{w(r')} \sqrt{\pi}1_{y_n}(r')dr'\bigg) f(x',x_n)\overline{g(y',y_n)}dxdy.
\end{align*}
By denoting $W(x_n)=\int_{\R}w(r) \sqrt{\pi}1_{x_n}(r)dr$, the kernel of $S^*_wS_w$ (in $\dot{H}^{-\ell}$) can be expressed as 
$$K_w(x,y)=\widecheck{\phi^2}_{-\ell,\Sigma}(x,y)W(x_n)\overline{W(y_n)},$$
where 
$$\widecheck{\phi^2}_{-\ell,\Sigma}(x,y)= \int_{U\times U}e^{-2\pi i (x'\cdot \xi + x_n h(\xi))} e^{2\pi i (y'\cdot \zeta + y_n h(\zeta))} \phi(\xi)\overline{\phi(\zeta)} |\xi-\zeta|^{2\gamma-n+1}  d\zeta d\xi.$$

For $\lan \phi S_wf, \phi S_wg\ran_{L^2(\Sigma)}$, its kernel is $$K_w(x,y)=\widecheck{\phi^2}_{0,\Sigma}(x,y)W(x_n)\overline{W(y_n)},$$
where 
$$\widecheck{\phi^2}_{0,\Sigma}(x,y)= \int_{U\times U}e^{-2\pi i [(x'-y')\cdot \xi + (x_n-y_n) h(\xi)]}|\phi(\xi)|^2 d\xi.$$
\subsection{A useful lemma}
The following version of Stein-Weiss inequality is based on the proof of Theorem 1.16 in \cite{GoldStol}. 
\begin{lemma} \label{useful1}
    Let $\alpha ,\beta\in [0, \frac{n-1}{2}]$, $\gamma \in [0,\frac{n-1}{2})$, and $B(f,g):= \int_{\R^n\times \R^n} K(x,y)f(x)g(y)dxdy.$ Suppose that the kernel $K$ satisfies 
$$|K(x,y)|\lesssim (1+|x|)^{\alpha-\gamma}(1+|y|)^{\alpha-\gamma}(1+|x-y|)^{\beta +\gamma-\frac{n-1}{2}}$$
for all $x, y\in\R^n$. We have $$|B(f,g)|\lesssim \|f\|_{L^p(\R^n)}\|g\|_{L^p(\R^n)}$$
provided that  
\begin{itemize}
    \item $p>1$, $\gamma \geq \alpha$, $\sigma_p>\alpha+\beta$, $\kappa_p\geq 2\alpha -\gamma+\beta$; or
    \item $p=1$ and $\gamma \geq \alpha$, $\frac{n-1}{2}\geq\alpha+\beta$, $\frac{n-1}{2}\geq 2\alpha -\gamma+\beta$.
\end{itemize}
\end{lemma}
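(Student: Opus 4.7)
The plan is to adapt the strategy of Theorem 1.16 in \cite{GoldStol}, combining the $TT^*$ form of the Tomas-Stein restriction theorem with a dyadic weighted decomposition for the polynomial factors. The starting point is the base case $\alpha = \beta = \gamma = 0$: here $K(x,y) = (1+|x-y|)^{-(n-1)/2}$ is (up to constants) an upper bound for $|\widehat{d\sigma}(x-y)|$, where $d\sigma$ is surface measure on a unit-sized piece of $\Sigma$ whose decay is governed by the non-vanishing Gaussian curvature. By the $TT^*$ identity, the boundedness of $\iint f(x) g(y) |\widehat{d\sigma}(x-y)| \,dx\,dy$ by $\|f\|_{L^p} \|g\|_{L^p}$ is equivalent to the Tomas-Stein restriction inequality, valid for $\kappa_p \geq 0$ when $p > 1$, and trivially for $p = 1$ since the kernel is uniformly bounded in that case.

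To incorporate the weights, I would perform a dyadic decomposition $f = \sum_j f_j$, $g = \sum_k g_k$ with $f_j$ supported on $|x| \sim 2^j$ and similarly for $g_k$. On the piece $|x| \sim 2^j$, $|y| \sim 2^k$ the factor $(1+|x|)^{\alpha - \gamma}(1+|y|)^{\alpha - \gamma}$ becomes $\sim 2^{(j+k)(\alpha-\gamma)}$, while the third factor $(1+|x-y|)^{\beta + \gamma - (n-1)/2}$ is split: the $-(n-1)/2$ portion is absorbed into the base-case bilinear estimate, and the residual $(1+|x-y|)^{\beta + \gamma}$ is dominated, depending on the dyadic regime, either by $2^{\max(j,k)(\beta + \gamma)}$ or by $(1+|x|)^{\beta+\gamma}(1+|y|)^{\beta+\gamma}$ via the inequality $1+|x-y| \leq (1+|x|)(1+|y|)$. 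This yields a dyadic estimate
$$|B(f_j, g_k)| \lesssim 2^{E(j,k)} \|f_j\|_{L^p} \|g_k\|_{L^p}$$
with an explicit exponent $E(j,k)$ depending on $\alpha, \beta, \gamma, p, n$.

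Summing over $j, k$ then reduces to convergence of a double series. The on-diagonal contribution ($j \approx k$) is controlled precisely by the condition $\kappa_p \geq 2\alpha - \gamma + \beta$, while the off-diagonal contribution is absorbed via a Schur test that converges exactly when $\sigma_p > \alpha + \beta$. For $p = 1$ the $L^p \to L^p$ loss at each dyadic scale disappears (kernel bounded plus $L^1 \times L^1$ duality), and the strict inequality is relaxed to the admissible equality $\frac{n-1}{2} \geq \alpha + \beta$. The main obstacle I anticipate is apportioning $\beta + \gamma$ between the ``near-diagonal'' and ``far-from-origin'' regimes uniformly in $j, k$, particularly when $\beta + \gamma$ is close to $(n-1)/2$ and the geometric decay of the residual factor is insufficient for a clean Schur argument, so that the endpoint exponents in the statement must be verified by a direct interpolation rather than an immediate geometric series bound.
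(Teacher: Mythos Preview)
Your strategy matches what the paper intends: it gives no proof and simply says the lemma ``is based on the proof of Theorem~1.16 in \cite{GoldStol},'' noting that the only change is that $\alpha,\beta$ need not be integers. Your dyadic decomposition plus Tomas--Stein at each scale is exactly that argument.

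There is, however, a genuine logical gap in the base case as you have written it, and it reflects an imprecision in the lemma's statement itself. You observe that $(1+|x-y|)^{-(n-1)/2}$ is an \emph{upper} bound for $|\widehat{d\sigma}(x-y)|$ and then invoke Tomas--Stein for the bilinear form with kernel $|\widehat{d\sigma}|$. But that inequality runs the wrong way: bounding the smaller kernel does not bound the larger one. In fact the lemma, taken literally for an arbitrary $K$ satisfying only the stated pointwise bound, is false in the full range $\kappa_p\geq 0$: test against $f=g=\chi_{B_R}$ with the nonnegative kernel $(1+|x-y|)^{-(n-1)/2}$ to see that one only gets $p\leq \frac{4n}{3n+1}<\frac{2n+2}{n+3}$ for $n\geq 2$.

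What makes the argument work---both in \cite{GoldStol} and in every application in this paper---is that the kernels are not arbitrary: they have the form $\widecheck{\phi^2}_{-\gamma,\Sigma}(x,y)$ times a polynomial weight, and the oscillatory integral $\widecheck{\phi^2}_{-\gamma,\Sigma}$ carries the extra $L^2$-boundedness (from $|\phi|^2\,d\sigma$ being a finite measure) needed for the Tomas--Stein interpolation. Your scheme is correct once you make this structural assumption explicit: run the dyadic decomposition on the weight factors as you describe, but in the base estimate use the full $TT^*$ machinery (dyadic pieces of $\widecheck{\phi^2}_{-\gamma,\Sigma}$ interpolated between $L^1\to L^\infty$ and $L^2\to L^2$) rather than the size bound alone. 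With that adjustment the on-diagonal sum is controlled by $\kappa_p\geq 2\alpha-\gamma+\beta$ and the off-diagonal Schur sum by $\sigma_p>\alpha+\beta$, exactly as you anticipated.
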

We remark that Theorem 1.16 in \cite{GoldStol} requires that $\alpha$ and $\beta$ to be integers because they are the order of partial derivatives.

\section{Proof of Theorem \ref{main1} and \ref{main2}} \label{mainproof}
\subsection{Proof of Theorem \ref{main1}}
\subsubsection{Case $p>1$}
 Suppose $\beta >0$ and $b\in\R$.
We start with a lemma.

\begin{lemma} \label{iterate1}
    Let $a\in (-1,\infty)$, $0\leq A < \frac{n-1}{2}$, $\ell\geq 0$, and $w\in L^{\infty}(\R;\C)$ with norm at most 1. We have 
$$\|\phi S_{u^{a+ib}w}f\|_{H^{-A}(\Sigma)}^2\lesssim \|\phi\widehat{f}\|_{H^{\ell}(\Sigma)}\sup_{\|w\|_{L^{\infty}}\leq 1} \|\phi S_{u^{2a+1+bi}w}f\|_{H^{-2A-\ell}(\Sigma)}+\|f\|_{L^p(\R^n)}^2$$
provided that $\kappa_p\geq 2a-A+2$, $\sigma_p>2a+2$, $A\geq a$, and everything on the right-hand side is finite.
\end{lemma}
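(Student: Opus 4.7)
The strategy is to express $\|\phi S_{u^{a+ib}w}f\|_{H^{-A}(\Sigma)}^2$ as a bilinear form in $f$ via the kernel computation from Section~\ref{Pre}. Replacing $w$ by $u^{a+ib}w$ in the formula derived there, this quantity equals, up to constants,
\[
\iint f(x)\overline{f(y)}\,\widecheck{\phi^2}_{-A,\Sigma}(x,y)\,W_a(x_n)\overline{W_a(y_n)}\,dx\,dy,
\]
where $W_a(x_n)=\sqrt{\pi}\int r^{a+ib}w(r)\,1_{x_n}(r)\,dr$ satisfies the pointwise bound $|W_a(x_n)|\lesssim |x_n|^{a+1}$. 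The plan is to split
\[
W_a(x_n)\overline{W_a(y_n)} \;=\; |W_a(x_n)|^2 \;+\; W_a(x_n)\,\overline{\bigl(W_a(y_n)-W_a(x_n)\bigr)},
\]
estimate the two resulting bilinear forms separately, and match them with the two terms on the right-hand side.

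For the ``diagonal'' piece, the key identity---obtained by expanding $|W_a(x_n)|^2$ as a double integral over $[0,x_n]^2$ and making the substitution $(r,s)=(tu,u)$ with $t\in(0,1)$---takes the form
\[
|W_a(x_n)|^2 \;=\; c_a\,W_{2a+1+bi,\,w'}(x_n)
\]
for some $w'$ with $\|w'\|_\infty\le 1$ and a constant $c_a=2\sqrt{\pi}/(a+1)$ independent of $b$ (the formula for $x_n<0$ is analogous, with signs adjusted according to the piecewise definition of $1_{x_n}(r)$). Substituting this into the bilinear form, the diagonal piece becomes $c_a\,\langle \phi S_{u^{2a+1+bi}w'}f,\;\phi\widehat{f}|_\Sigma\rangle_{H^{-A}(\Sigma)}$. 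Applying Cauchy--Schwarz on the Fourier side of the $H^{-A}$ inner product, splitting the weight $(1+|\xi|)^{-2A}$ as $(1+|\xi|)^{-2A-\ell}\cdot(1+|\xi|)^{\ell}$, yields the pairing bound $|\langle u,v\rangle_{H^{-A}(\Sigma)}|\le \|u\|_{H^{-2A-\ell}(\Sigma)}\|v\|_{H^{\ell}(\Sigma)}$, and then taking the supremum over admissible $w'$ recovers precisely the first term on the right-hand side.

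For the ``off-diagonal'' piece, the pointwise estimate $|W_a(x_n)|\,|W_a(y_n)-W_a(x_n)|\lesssim |x_n|^{a+1}|y_n|^{a+1}+|x_n|^{2(a+1)}$ combined with the standard stationary-phase decay of $\widecheck{\phi^2}_{-A,\Sigma}(x,y)$ places the composite kernel into the form required by Lemma~\ref{useful1}, with $\alpha=a+1$ and $\gamma,\beta$ chosen so that $\alpha-\gamma$ and $\beta+\gamma-(n-1)/2$ match the kernel's actual decay in $|x|$, $|y|$, and $|x-y|$. Under this parameter choice the hypotheses $A\ge a$, $\sigma_p>2a+2$, $\kappa_p\ge 2a-A+2$ translate directly into the Stein--Weiss conditions $\gamma\ge\alpha$, $\sigma_p>\alpha+\beta$, $\kappa_p\ge 2\alpha-\gamma+\beta$, yielding the desired bound by $\|f\|_{L^p(\R^n)}^2$.

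The main obstacle will be establishing the algebraic identity $|W_a(x_n)|^2=c_a W_{2a+1+bi,w'}(x_n)$ with uniform (and $b$-independent) control on $\|w'\|_\infty$. Because the definition of $W_a$ is piecewise asymmetric in the sign of $x_n$, the identity must be checked on each half-line with compatible constants and a consistent interpretation of $r^{a+ib}$ for $r<0$ (so that $|r^{a+ib}|=|r|^a$ uniformly in $b$). A secondary difficulty is that the asymmetric contribution $|\widecheck{\phi^2}_{-A,\Sigma}(x,y)|\,|x_n|^{2(a+1)}$ fits the symmetric hypothesis of Lemma~\ref{useful1} only after redistributing the $(1+|x|)$-weight via the elementary inequality $(1+|x|)\le(1+|y|)(1+|x-y|)$, which is precisely where the interplay between $A$, $a$, and the constraint $A\ge a$ enters.
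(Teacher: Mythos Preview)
Your treatment of the diagonal piece is essentially the paper's (the paper obtains $|W_{a,b}|^2=\int u^{2a+1+ib}\tilde w(u)\sqrt\pi 1_{x_n}(u)\,du$ by differentiating rather than by your substitution, but this is cosmetic). The genuine gap is in the off-diagonal piece.

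Your pointwise bound $|W_a(x_n)|\,|W_a(y_n)-W_a(x_n)|\lesssim |x_n|^{a+1}|y_n|^{a+1}+|x_n|^{2(a+1)}$ is just the triangle inequality and contains no factor of $|x_n-y_n|$. Combined with the decay of $\widecheck{\phi^2}_{-A,\Sigma}$, the composite kernel is then controlled by $(1+|x|)^{a+1-A}(1+|y|)^{a+1-A}(1+|x-y|)^{A-\frac{n-1}{2}}$ (after your redistribution trick). But no matter how you assign $(\alpha,\beta,\gamma)$ in Lemma~\ref{useful1}, matching $\alpha-\gamma=a+1-A$ and then imposing $\gamma\ge\alpha$ forces $A\ge a+1$, which is strictly stronger than the hypothesis $A\ge a$. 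Your sentence ``the hypotheses $A\ge a,\dots$ translate directly into $\gamma\ge\alpha,\dots$'' with $\alpha=a+1$ is therefore an arithmetic slip: it translates into $A\ge a+1$. Even if you upgrade to the mean-value bound $|W_a(y_n)-W_a(x_n)|\lesssim |x_n-y_n|\max\{|x_n|^{a},|y_n|^{a}\}$, your asymmetric split produces only a single difference, hence only one power of $|x_n-y_n|$; on the region $|x_n|\sim|y_n|$ this gives $\alpha=a+\tfrac12$ and still forces $A\ge a+\tfrac12$.

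What the paper does instead is use the \emph{symmetric} polarization
\[
W(x_n)\overline{W(y_n)}=\tfrac12|W(x_n)|^2+\tfrac12|W(y_n)|^2-\tfrac12|W(x_n)-W(y_n)|^2,
\]
legitimate after reducing to real-valued $w$ and noting that the full bilinear form is real. The remainder is now a \emph{squared} difference, and the mean-value estimate gives $|W(x_n)-W(y_n)|^2\lesssim |x_n-y_n|^2\max\{|x_n|^{2a},|y_n|^{2a}\}$. On $|x_n|\sim|y_n|$ this reads $|x_n|^{a}|y_n|^{a}|x_n-y_n|^{2}$, so Lemma~\ref{useful1} applies with $\alpha=a$, $\beta=2$, $\gamma=A$, and the condition $\gamma\ge\alpha$ becomes exactly $A\ge a$. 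The extra power of $|x_n-y_n|$ gained by squaring the difference is precisely what recovers the sharp range.
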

\begin{proof}
We first observe that for complex-valued $w=w_R+iw_I$, where $w_R$ and $w_I$ are real-valued bounded measurable functions, we have
\begin{align*}
    S_{u^{a+ib}w}f(\xi) &= \int_{\R} r^{a+ib}[w_R(r)+iw_I(r)]\mathscr{F}_{\eta\rightarrow r}(Tf(\xi,\dot))(-r)dr \\
    &=S_{u^{a+ib}w_R}f(\xi) + i S_{u^{a+ib}w_I}f(\xi).
\end{align*}
By taking the norm, one has 
\begin{align*}
    \sup_{\|w\|_{L^{\infty}(\R;\C)}\leq 1}\|\phi S_{u^{a+ib}w}f\|_{H^{-A}(\Sigma)}\leq 2\sup_{\|w'\|_{L^{\infty}(\R;\R)}\leq 1}\|\phi S_{u^{a+ib}w'}f\|_{H^{-A}(\Sigma)}.
\end{align*}
Moreover, $L^{\infty}(\R;\R)\subset L^{\infty}(\R;\C)$, so we can focus on $w\in L^{\infty}(\R;\R)$ with $\|w\|_{L^{\infty}(\R;\R)}\leq 1$.

Note that one has 
    \begin{align*}
        &\langle \phi S_{u^{a+ib}w}f, \phi S_{u^{a+ib}w}f\rangle_{\dot{H}^{-A}(\Sigma)} \\
        &\simeq \int_{\R^n\times \R^n} \widecheck{\phi^2}_{-A,\Sigma}(x,y)\bigg(\frac{|W_{a,b}(x_n)|^2+|W_{a,b}(y_n)|^2}{2} - \frac12 R(x_n,y_n) \bigg)f(x',x_n)\overline{f(y',y_n)} dydx \\
        &=: (I)+(II)-(III),
    \end{align*}
where 
$$W_{a,b}(x_n):= \int_{\R} r^{a+ib}w(r)\sqrt{\pi}1_{x_n}(r)dr,$$
and $R(x_n,y_n)= |W_{a,b}(x_n)-W_{a,b}(y_n)|^2.$ 

The terms $|(I)|$ and $|(II)|$ can be controlled similarly, so it suffices to consider $|(I)|.$ By writing 
$$(I)= \bigg\langle \phi \mathscr{F}\bigg(\frac{|W_{a,b}(x_n)|^2}{2}f\bigg), \phi \widehat{g}\bigg\rangle_{\dot{H}^{-A}(\Sigma)},$$
one has 
\begin{align*}
    |(I)|&\leq \bigg\| \phi \mathscr{F}\bigg(\frac{|W_{a,b}(x_n)|^2}{2}f\bigg)\bigg\|_{\dot{H}^{-2A-\ell}(\Sigma)}\|\phi\widehat{g}\|_{H^{\ell}(\Sigma)}.
\end{align*} 
If we define
$$\widetilde{w}(u):= \big(u^{-a-1}w(u)W_{a,b}(u)+u^{-a-1-2bi}w(u)W_{a,b}(u)\big),$$
then $\int_{\R}u^{2a+1+bi}\widetilde{w}(u)\sqrt{\pi}1_{x_n}(u)du =\frac{1}{2}|W_{a,b}(x_n)|^2$ and $\|\widetilde{w}\|_{L^{\infty}(\R)}\lesssim 1$ because $|W_{a,b}(u)|\lesssim |u|^{a+1}$ and $\|w\|_{L^{\infty}}\leq 1.$ 
Therefore, 
\begin{align*}
    \bigg\| \phi \mathscr{F}\bigg(\frac{|W_{a,b}(x_n)|^2}{2}f\bigg)\bigg\|_{\dot{H}^{-2A-\ell}(\Sigma)} &\simeq \|\phi S_{u^{2a+1+bi}\widetilde{w}}f\|_{\dot{H}^{-2A-\ell}(\Sigma)} \lesssim \sup_{\|w\|_{L^{\infty}}\leq 1} \|\phi S_{u^{2a+1+
bi}w}f\|_{H^{-2A-\ell}(\Sigma)}.
\end{align*}
The condition $a\in (-1,\infty)$ is used to obtain an estimate for $|W_{a,b}(u)|$, but the relation between $p$, $a$, and $A$ is not used.

To estimate $|(III)|$, note that $|R(x_n,y_n)|\lesssim |x_n-y_n|^2 \max\lbrace|x_n|^{2a}, |y_n|^{2a}\rbrace$, and rewrite $(III)$ as
\begin{align*}
    |(III)|&\lesssim \int_{\R}\bigg(\int_{|x_n|\geq 2|y_n|}+\int_{\frac{|x_n|}{2}\leq |y_n|\leq 2|x_n|} +\int_{|y_n|\geq 2|x_n|}\bigg) |\widecheck{\phi^2}_{-A,\Sigma}(x,y)R(x_n,y_n)||f(x',x_n)||f(y',y_n)|dydx\\
    &=: (III_1)+(III_2)+(III_3).
\end{align*}
One can observe that if $|x_n|\geq 2|y_n|$ or $|x_n|\leq 2|y_n|$, then 
$$|\widecheck{\phi^2}_{-A,\Sigma}(x,y) R(x_n,y_n)|\lesssim (1+|x_n|)^{-A}(1+|y_n|)^{-A}(1+|x_n-y_n|)^{A+2+2a-\frac{n-1}{2}};$$
while if $\frac{|x_n|}{2}\leq |y_n|\leq 2|x_n|$, then 
$$|\widecheck{\phi^2}_{-A,\Sigma}(x,y) R(x_n,y_n)|\lesssim (1+|x_n|)^{a-A}(1+|y_n|)^{a-A}(1+|x_n-y_n|)^{A+2-\frac{n-1}{2}},$$

In order to apply Lemma \ref{useful1}, we require $A\geq 0$, $2+2a<\sigma_p$, and $2+2a-A\leq \kappa_p$ in the first case; and $A\geq a$, $2+a<\sigma_p$, and $2a-A+2\leq \kappa_p$ in the second case. Therefore, we need $A\geq a$, $2+2a<\sigma_p$, and $2a-A+2\leq\kappa_p$, and the conditions are satisfied by the hypothesis.

Applying Lemma \ref{useful1} to $(III_1)$ and $(III_3)$ with $\beta = 0$, $\beta = 2+2a$, $\gamma = A$ and to $(III_2)$ with $\beta = a$, $\beta = 2$, $\gamma=A$, for $i=1,2,3$ we have
$$|(III_i)|\lesssim \|f\|_{L^p(\R^n)}^2.$$

The case $A=0$ follows similarly.
\end{proof}

Let us go back to the proof of Theorem \ref{main1}.
Assume $\beta\in (0,1]$. By applying Lemma \ref{iterate1} to $A=0\geq a=\beta -1 >-1$, we have \begin{align} \label{beta/2}
    \|\phi S_{u^{\beta-1+ib}w}f\|_{L^2(\Sigma)}\lesssim \|\phi\widehat{f}\|_{H^{\ell}(\Sigma)}^{\frac12}\sup_{\|w\|_{L^{\infty}}\leq 1}\|\phi S_{u^{2\beta -1+ib}w}\|^{\frac12}_{H^{-\ell}(\Sigma)}+\|f\|_{L^p(\R^n)}
\end{align}
with $\kappa_p\geq 2\beta$ and $\sigma_p>2\beta$.

We also need an \textit{a priori} estimate.
\begin{lemma} \label{apriori1}
For $1<p\leq \frac{2n+2}{n+3}$, we have
    $$\sup_{\|w\|_{L^{\infty}(\R;\C)}\leq 1, b\in \R}\|\phi S_{u^{\kappa_p-1+ib}w(u)}f\|_{\dot{H}^{-\kappa_p}(\Sigma)}\lesssim \|f\|_{L^p(\R^n)}.$$
\end{lemma}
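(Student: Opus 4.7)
The plan is to interpret the left-hand side as a Hermitian bilinear form in $f$ and apply the Stein--Weiss-type Lemma \ref{useful1} to it. Reusing the kernel computation from Section \ref{Pre} with $\gamma=\kappa_p$ and with $w(u)$ replaced by $u^{\kappa_p-1+ib}w(u)$, one has
\begin{align*}
\|\phi S_{u^{\kappa_p-1+ib}w}f\|_{\dot{H}^{-\kappa_p}(\Sigma)}^2 \;\simeq\; \int_{\R^n\times\R^n} \widecheck{\phi^2}_{-\kappa_p,\Sigma}(x,y)\,W(x_n)\overline{W(y_n)}\,f(x)\overline{f(y)}\,dx\,dy,
\end{align*}
where
\begin{align*}
W(x_n):=\sqrt{\pi}\int_{\R} r^{\kappa_p-1+ib}w(r)\,1_{x_n}(r)\,dr.
\end{align*}
A direct integration based solely on $\|w\|_{L^\infty}\le 1$ gives $|W(x_n)|\le\sqrt{\pi}\int_0^{|x_n|} r^{\kappa_p-1}\,dr\lesssim|x_n|^{\kappa_p}\lesssim(1+|x|)^{\kappa_p}$, uniformly in $b\in\R$ and in $w$. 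Thus $W$ acts as a polynomial weight of order $\kappa_p$ in the $x_n$-variable.

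The next task is a suitable pointwise decay bound on $\widecheck{\phi^2}_{-\kappa_p,\Sigma}(x,y)$. I would write the Riesz weight as a Fourier integral,
\begin{align*}
|\xi-\zeta|^{2\kappa_p-n+1}=c_n\int_{\R^{n-1}}|z|^{-2\kappa_p}e^{-2\pi i z\cdot(\xi-\zeta)}\,dz,
\end{align*}
substitute into the definition of $\widecheck{\phi^2}_{-\kappa_p,\Sigma}$, apply stationary phase to the resulting $\xi$- and $\zeta$-integrals (legitimate because of the non-vanishing Gaussian curvature of $\Sigma$ at the origin and the compactness of $\supp\phi$), and then carry out the $z$-integration. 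The expected output, when multiplied by $|W(x_n)W(y_n)|\lesssim(1+|x|)^{\kappa_p}(1+|y|)^{\kappa_p}$, is a kernel obeying a bound of the form required by Lemma \ref{useful1} with the parameter choice $\alpha=\beta=0$ and $\gamma=\kappa_p$.

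With $\alpha=\beta=0$ and $\gamma=\kappa_p$ the admissibility conditions of Lemma \ref{useful1} reduce to $\gamma\ge\alpha$, $\sigma_p>\alpha+\beta$, and $\kappa_p\ge 2\alpha-\gamma+\beta$, i.e.\ $\kappa_p\ge 0$, $\sigma_p>0$, and $2\kappa_p\ge 0$. Since $\sigma_p-\kappa_p=1-1/p>0$ for $p>1$, one has $\sigma_p>\kappa_p\ge 0$, and all three conditions hold throughout $1<p\le\frac{2n+2}{n+3}$. Lemma \ref{useful1} then delivers the bilinear bound $\lesssim\|f\|_{L^p(\R^n)}^2$ with constants independent of $b$ and $w$, which is the claim after taking square roots and the supremum. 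The endpoint $\kappa_p=0$ deserves a separate remark: the factor $r^{-1+ib}$ is interpreted as a principal-value multiplier and the inequality collapses to the classical Tomas--Stein restriction theorem.

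The main technical obstacle is the pointwise decay estimate on $\widecheck{\phi^2}_{-\kappa_p,\Sigma}(x,y)$. A careful dyadic decomposition based on the relative sizes of $|x|$, $|y|$, and $|x-y|$ is likely required to verify in each regime that the product $\widecheck{\phi^2}_{-\kappa_p,\Sigma}(x,y)\,W(x_n)\overline{W(y_n)}$ obeys the precise pointwise inequality demanded by Lemma \ref{useful1}; the bookkeeping in the intermediate regime $|x|\sim|y|\sim|x-y|$, where no single factor gives decay, is the most delicate point.
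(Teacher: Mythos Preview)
Your approach is the same as the paper's: write the square of the norm as the bilinear form with kernel $\widecheck{\phi^2}_{-\kappa_p,\Sigma}(x,y)W(x_n)\overline{W(y_n)}$, bound $|W(x_n)|\lesssim|x_n|^{\kappa_p}$, and feed the result into Lemma~\ref{useful1} with $\alpha=\beta=0$, $\gamma=\kappa_p$; for the endpoint $\kappa_p=0$ the paper invokes Hardy--Littlewood--Sobolev, which is your Tomas--Stein remark in $TT^*$ form.

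Where you diverge from the paper is in the last paragraph, and there you are making the argument harder than it is. The paper simply uses the global bound
\[
|\widecheck{\phi^2}_{-\kappa_p,\Sigma}(x,y)|\lesssim(1+|x|)^{-\kappa_p}(1+|y|)^{-\kappa_p}(1+|x-y|)^{\kappa_p-\frac{n-1}{2}},
\]
which is the standard Sobolev-trace/oscillatory integral estimate for this object (see \cite{GoldStol}) and holds uniformly, with no need for a dyadic splitting into regimes. Multiplying by $|W(x_n)W(y_n)|\lesssim(1+|x|)^{\kappa_p}(1+|y|)^{\kappa_p}$ the powers cancel \emph{exactly}, leaving the single kernel bound $(1+|x-y|)^{\kappa_p-\frac{n-1}{2}}$, which is already the hypothesis of Lemma~\ref{useful1} with $\alpha=\beta=0$, $\gamma=\kappa_p$. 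So there is no ``intermediate regime $|x|\sim|y|\sim|x-y|$'' to worry about and no further bookkeeping: the whole point of choosing the Sobolev exponent equal to $\kappa_p$ is precisely that the weight and the kernel decay balance perfectly.
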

\begin{proof}
Let us write $W_{p,b}(r):=W_{\kappa_p-1,b}(r)=\int_{\R}u^{\kappa_p-1+ib}w(u)\sqrt{\pi}1_{r}(u)du. $

Since $\|w\|_{L^{\infty}}\leq 1$, we have
\begin{align*}
    &\bigg|\int_{\R}u^{\kappa_p-1+ib}w(u)\sqrt{\pi}1_{x_n}(u)du\bigg|\lesssim  |x_n|^{\kappa_p}.
\end{align*}
Consider the bilinear form $B(f,g)=\lan \phi S_{u^{\kappa_p-1+ib}w(u)}f,\phi S_{u^{\kappa_p-1+ib}w(u)}g\ran_{\dot{H}^{-\kappa_p}(\Sigma)}$. Its kernel is bounded by
\begin{align*}
    |\widecheck{\phi^2}_{-\kappa_p,\Sigma}(x,y) W_{p,b}(x_n)\overline{W_{p,b}(y_n)}|&\lesssim(1+|x_n|)^{-\kappa_p}(1+|y_n|)^{-\kappa_p}(1+|x_n-y_n|)^{\kappa_p-\frac{n-1}{2}}(1+|x_n|)^{\kappa_p}(1+|y_n|)^{\kappa_p}\\
    &\lesssim (1+|x_n-y_n|)^{\kappa_p-\frac{n-1}{2}};
\end{align*}
and by the aid of Lemma \ref{useful1}, we have
$$\|\phi S_{u^{\kappa_p-1+ib}w(u)}f\|_{\dot{H}^{-\kappa_p}(\Sigma)}\lesssim \|f\|_{L^p(\R^n)}.$$

When $\kappa_p=0$ (\textit{i.e.} when $p=\frac{2n+2}{n+3}$), we use the Hardy-Littlewood-Sobolev inequality instead.
\end{proof}

Using Linder\"olf Theorem by regarding $(1+|\xi|)^{m+ib}$ as a weight and applying to the function $\widehat{f}$, set $\theta\in [0,1]$ such that $2\beta-1=(1-\theta)(\kappa_p-1)+\theta(\beta-1)$, \textit{i.e.} $\theta = \frac{\kappa_p-2\beta}{\kappa_p-\beta}=\frac{\kappa_p-2\beta}{\kappa_p-\beta}$, we have 
\begin{align}\label{itereq}
    \|\phi S_{u^{2\beta -1+ib}w}f\|_{\dot{H}^{-\ell}(\Sigma)}&\lesssim \sup_{\|w\|_{L^{\infty}}\leq 1}\|\phi S_{u^{\beta-1+ib}w}f\|_{L^2(\Sigma)}^{\theta}  \sup_{\|w\|_{L^{\infty}}\leq 1}\|\phi S_{u^{\kappa_p-1+ib}w}f\|_{H^{-\kappa_p}(\Sigma)}^{1-\theta}\nonumber\\
    &\lesssim \big(\|\phi\widehat{f}\|_{H^{\ell}(\Sigma)}^{\frac{1}{2}} \sup_{\|w\|_{L^{\infty}}\leq 1}\|\phi S_{u^{2\beta-1+ib}w}f\|_{H^{-\ell}(\Sigma)}^{\frac12}+\|f\|_{L^p(\R^n)}\big)^{\theta}\|f\|_{L^p(\R^n)}^{1-\theta}\nonumber\\
    &\lesssim \|\phi \widehat{f}\|_{H^{\ell}(\Sigma)}^{\theta}\|f\|_{L^p(\R^n)}^{1-\theta}+\sup_{\|w\|_{L^{\infty}}\leq 1}\|\phi S_{u^{2\beta-1+ib}w}f\|_{H^{-\ell}(\Sigma)}^{\theta}\|f\|_{L^p(\R^n)}^{1-\theta}+\|f\|_{L^p(\R^n)}.
\end{align}
Here, $\ell = (1-\theta)\kappa_p=\frac{\kappa_p\beta}{\kappa_p-\beta}$.

We can obtain the estimate for $\sup_{\|w\|_{L^{\infty}}\leq 1}\|\phi S_{u^{\beta-1+ib}w}f\|_{H^{-\ell}(\Sigma)}$ using \eqref{itereq}. Let $C$ be the implicit constant in \eqref{itereq}. If $\sup_{\|w\|_{L^{\infty}}\leq 1}\|\phi S_{u^{2\beta-1+ib}w}f\|_{H^{-\ell}(\Sigma)}\leq (2C)^{\frac{1}{1-\theta}} \|f\|_{L^p(\R^n)}$, then we are done. If not, \textit{i.e.}  $\sup_{\|w\|_{L^{\infty}}\leq 1}\|\phi S_{u^{2\beta-1+ib}w}f\|_{H^{-\ell}(\Sigma)} >(2C)^{\frac{1}{1-\theta}} \|f\|_{L^p(\R^n)}$, then \eqref{itereq} implies
\begin{align*}
     \sup_{\|w\|_{L^{\infty}}\leq 1}\|\phi S_{u^{2\beta-1+ib}w}f\|_{H^{-\ell}(\Sigma)}^{\theta}&\leq C( \|\phi \widehat{f}\|_{H^{\ell}(\Sigma)}^{\theta}\|f\|_{L^p(\R^n)}^{1-\theta} +\|f\|_{L^p(\R^n)})(C\|f\|_{L^p(\R^n)}^{1-\theta})^{-1}\\
     &\leq \|\phi \widehat{f}\|_{H^{\ell}(\Sigma)}^{\theta}+\|f\|^{\theta}_{L^p(\R^n)}.
\end{align*}
Thus, we have 
\begin{align} \label{beta-1}
    \sup_{\|w\|_{L^{\infty}}\leq 1}\|\phi S_{u^{2\beta-1+ib}w}f\|_{H^{-\ell}(\Sigma)} \lesssim \|\phi\widehat{f}\|_{H^{\ell}(\Sigma)}+\|f\|_{L^{p}(\R^n)},
\end{align} 
for all $b\in\R$, $\ell \geq \frac{\kappa_p\beta}{\kappa_p-\beta}$ given that $f\in L^p(\R^n)$, $\kappa_p\geq 2\beta$, $p>1$ and $0\leq \beta \leq 1.$

Plugging \eqref{beta-1} into \eqref{beta/2}, we have 
\begin{align} \label{beta/2-1}
\sup_{\|w\|_{L^{\infty}}\leq 1}\|\phi S_{u^{\beta-1+ib}w}f\|_{L^2(\Sigma)}&\lesssim \|\phi \widehat{f}\|_{H^{\ell}(\Sigma)}^{\frac12}(\|\phi \widehat{f}\|_{H^{\ell}(\Sigma)}+\|f\|_{L^p(\R^n)})^{\frac12}+\|f\|_{L^p(\R^n)}\nonumber\\
&\simeq \|\phi \widehat{f}\|_{H^{\ell}(\Sigma)}+\|f\|_{L^p(\R^n)}.    
\end{align}
This finishes the proof of Theorem \ref{main1}.



\subsubsection{Case $p=1$}
When $p=1$, $\kappa_1=\frac{n-1}{2}$ and $\frac{\kappa_p \beta}{\kappa_p-\beta} = \frac{(n-1)\beta}{n-1-2\beta}$; thus, the argument in Lemma \ref{apriori1} is no longer true. 
\begin{lemma} \label{apriori2}
    Let $\varepsilon >0$. We have
    $$\sup_{\|w\|_{L^{\infty}(\R;\C)}\leq 1, b\in\R}\|\phi S_{u^{\kappa_1-1+ib}w(u)}f\|_{H^{-\kappa_1-\varepsilon}(\Sigma)}\lesssim \|f\|_{L^1(\R^n)}.$$
\end{lemma}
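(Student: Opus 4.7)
\textbf{Proof proposal for Lemma \ref{apriori2}.} The approach mirrors Lemma \ref{apriori1}, with one essential adjustment: because $\kappa_1 = \frac{n-1}{2}$ sits at the endpoint of the admissible range $(0, \frac{n-1}{2})$ for the homogeneous Sobolev norm, $\dot{H}^{-\kappa_1}(\Sigma)$ is not defined and one must instead work with the inhomogeneous $H^{-\kappa_1-\varepsilon}(\Sigma)$. The $\varepsilon$-loss is exactly what is needed to push the order of the Bessel weight strictly above the ambient dimension $n-1$, trading the homogeneous Riesz kernel $|\xi-\zeta|^{2A-n+1}$ (which fails to be integrable at the endpoint) for a bounded Bessel kernel.

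The plan is to express $\lan \phi S_{u^{\kappa_1-1+ib}w}f, \phi S_{u^{\kappa_1-1+ib}w}g\ran_{H^{-\kappa_1-\varepsilon}(\Sigma)}$ as a bilinear form with kernel
$K_w(x,y) = \widecheck{\phi^2}_{-(\kappa_1+\varepsilon),\Sigma}(x,y) W_{1,b}(x_n)\overline{W_{1,b}(y_n)}$,
where in the inhomogeneous setting $\widecheck{\phi^2}_{-(\kappa_1+\varepsilon),\Sigma}$ involves convolution against the inverse Fourier transform of $(1+|z|)^{-2(\kappa_1+\varepsilon)}$ rather than a Riesz kernel. Since $2(\kappa_1+\varepsilon) = n-1 + 2\varepsilon > n-1$, the weight is integrable on $\R^{n-1}$. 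Plugging its Fourier representation into the defining integral and rearranging yields
\begin{equation*}
\widecheck{\phi^2}_{-(\kappa_1+\varepsilon),\Sigma}(x,y) = \int_{\R^{n-1}} (1+|z|)^{-2(\kappa_1+\varepsilon)}\, E\phi(x'-z, x_n)\overline{E\phi(y'-z, y_n)}\, dz,
\end{equation*}
with $E\phi(x) := \int_U e^{-2\pi i (x'\cdot\xi+x_n h(\xi))} \phi(\xi)\, d\xi$ the usual Fourier extension.

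Next I would invoke the stationary phase bound $|E\phi(x)| \lesssim (1+|x|)^{-(n-1)/2}$, which rests on the non-vanishing Gaussian curvature of $\Sigma$, and use the trivial majorization $(1+|x|)^{-(n-1)/2} \leq (1+|x_n|)^{-(n-1)/2}$ to pull the $(1+|x_n|)^{-(n-1)/2}(1+|y_n|)^{-(n-1)/2}$ factor outside the $z$-integral, leaving the integrable weight behind. This produces $|\widecheck{\phi^2}_{-(\kappa_1+\varepsilon),\Sigma}(x,y)| \lesssim (1+|x_n|)^{-(n-1)/2}(1+|y_n|)^{-(n-1)/2}$. Combined with the direct bound $|W_{1,b}(x_n)| \lesssim |x_n|^{\kappa_1} \leq (1+|x_n|)^{(n-1)/2}$ (independent of $b$, since $|u^{\kappa_1-1+ib}| = u^{\kappa_1-1}$ for $u>0$), the decays cancel and $|K_w(x,y)| \lesssim 1$ uniformly in $x,y \in \R^n$, $b\in \R$, and $\|w\|_\infty \leq 1$. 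The bilinear form is then controlled by $\|K_w\|_\infty \|f\|_{L^1}\|g\|_{L^1}$, and setting $g = f$ produces the claimed estimate.

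The main obstacle I anticipate is the rigorous passage to the Fourier-inversion form of $\widecheck{\phi^2}_{-(\kappa_1+\varepsilon),\Sigma}$, which requires setting up Plancherel correctly for the inhomogeneous Bessel inner product on $\Sigma$. Once this identity is in hand, the integrability gain from $2\varepsilon > 0$ makes the ensuing pointwise kernel bound routine, and the uniformity in $b$ is automatic from the fact that $|u^{\kappa_1-1+ib}|$ depends only on $|u|$.
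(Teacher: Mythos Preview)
Your proof is correct and uses the same three ingredients as the paper: the stationary-phase decay of the extension $E\phi$, the trivial bound $|W_{\kappa_1-1,b}(x_n)|\lesssim |x_n|^{\kappa_1}$, and the integrability of $(1+|z|)^{-(n-1)-2\varepsilon}$ on $\R^{n-1}$. The organization differs, though. You stay within the bilinear $T^*T$ framework of Lemma~\ref{apriori1}, writing the $H^{-\kappa_1-\varepsilon}$ inner product via Plancherel as an integral of extension kernels against the Bessel weight and bounding the resulting kernel $K_w(x,y)$ pointwise by a constant; the $L^1\times L^1$ estimate is then immediate. The paper instead drops the bilinear form and argues directly on the single-function norm: since $f\mapsto \phi S_{u^{\kappa_1-1+ib}w}f$ is linear and the estimate is against $\|f\|_{L^1}$, it suffices to check delta measures $\delta_{(x',x_n)}$, for which the norm computation reduces to exactly your three ingredients (the paper phrases the stationary phase input as Van der Corput). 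Your route has the virtue of uniformity with the rest of the argument; the paper's extreme-point reduction is slightly quicker because it avoids setting up the Plancherel identity for the inhomogeneous kernel that you flagged as the main technical step.
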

\begin{proof}
Fix $\varepsilon>0$. Note that for $f\in L^1(\R^n)$, we have 
\begin{align*}
    &\|\phi S_{u^{\kappa_1-1+ib}w(u)}f\|_{H^{-\kappa_1-\varepsilon}(\Sigma)}\\
    &= \bigg\|\phi(\xi)\int_{\R^n} W_{\kappa_1-1,b}(x_n)e^{-2\pi i (x'\xi+x_nh(\xi))}f(x',x_n)dx'dx_n\bigg\|_{H^{-\kappa_1-\varepsilon}(\R^{n-1})}.
\end{align*}
We will prove that 
$$ \bigg\|\phi(\xi)\int_{\R^n}  W_{\kappa_1-1,b}(x_n)e^{-2\pi i (x'\xi+x_nh(\xi))}\mu(dx',dx_n)\bigg\|_{H^{-\kappa_1-\varepsilon}(\R^{n-1})}\lesssim |\mu(\R^n)|$$
for all Borel measures $\mu$ with $|\mu(\R^n)|\leq 1$. Then the lemma follows from taking $d\mu = \frac{f(x)}{\|f\|_{L^1(\R^n)}}dx$.

In this case, it suffices to prove for the delta measures because the delta measures are extremal points in the unit ball of Borel measures. Let the delta measure be $\delta_{(x',x_n)}$. We have 
\begin{align*}
     &\bigg\|\phi(\xi)\int_{\R^n}  W_{\kappa_1-1,b}(x_n)e^{-2\pi i (x'\xi+x_nh(\xi))}\delta_{(x',x_n)}(dx',dx_n)\bigg\|_{H^{-\kappa_1-\varepsilon}(\R^{n-1})} \\
     &= \bigg\|\phi(\xi) W_{\kappa_1-1,b}(x_n)e^{-2\pi i (x'\xi+x_nh(\xi))}\bigg\|_{H^{-\kappa_1-\varepsilon}(\R^{n-1})} \\
     &\lesssim (1+|x_n|)^{\kappa_1}\big\|\phi(\xi) e^{-2\pi i (x'\xi+x_nh(\xi))}\big\|_{H^{-\kappa_1-\varepsilon}(\R^{n-1})} \\
     &\simeq (1+|x_n|)^{\kappa_1} \bigg(\int_{\R^{n-1}}(1+|y|)^{-2\kappa_1-2\varepsilon}\Big|\mathscr{F}_{\xi}(\phi(\xi) e^{-2\pi i (x'\xi+x_nh(\xi))})(y)\Big|^2dy\bigg)^{\frac{1}{2}}\\
     &\lesssim (1+|x_n|)^{\kappa_1} (1+|x_n|)^{-\kappa_1}  \bigg(\int_{\R^{n-1}}(1+|y|)^{-2\kappa_1-2\varepsilon}dy\bigg)^{\frac{1}{2}} \lesssim 1 = |\delta_{(x',x_n)}(\R^n)|.
\end{align*}
We have used the Van der Corput Lemma to obtain $(1+|x_n|)^{-\kappa_1}$ in the last line.
\end{proof}

If we set $\theta = \frac{\kappa_1-2\beta}{\kappa_1-\beta}$, which is in $[0,1]$ provided that $0\leq \beta \leq 1$ and $2\beta \leq \kappa_1$, then Equation \eqref{itereq} holds if $\ell\geq \frac{\beta\kappa_1}{\kappa_1-\beta}+\frac{\varepsilon\beta}{\kappa_1-\beta}>\frac{\beta\kappa_1}{\kappa_1-\beta}$ and $p=1$; and this implies \eqref{beta-1} and \eqref{beta/2-1} whenever $\ell>\frac{\beta\kappa_1}{\kappa_1-\beta}$, $0\leq \beta\leq \min\lbrace1, \kappa_1/2\rbrace$, and $p=1$.  

\subsection{Higher power of $\beta$}
For $\beta >1$, one can take $A\geq a=\beta -1$, we have 
\begin{align*}
\|\phi S_{u^{\beta-1+ib}w}f\|_{H^{-A}(\Sigma)}^2\lesssim \|\phi\widehat{f}\|_{H^{\ell}(\Sigma)}\sup_{\|w\|_{L^{\infty}}\leq 1} \|\phi S_{u^{2\beta-1+bi}w}f\|_{H^{-2A-\ell}(\Sigma)}+\|f\|_{L^p(\R^n)}^2.
\end{align*}
Interpolating this with Lemma \ref{apriori1}, we have 
\begin{align*}
    \|\phi S_{u^{2\beta-1+ib}w}f\|_{H^{-2A-\ell}}\lesssim \|f\|_{L^p(\R^n)}+\|\phi \widehat{f}\|_{H^{\ell}(\Sigma)}
\end{align*}
provided that 
$\beta\in [1, \frac{\kappa_p}{2}]$ (which is less than $\frac{n+1}{2}$), $\kappa_p\geq 2\beta -A$, and $\sigma_p>2\beta$, 
 $\ell \geq \frac{\kappa_p}{\kappa_p-\beta}(\beta-A)$, and $\beta-1 \leq A< \beta$ hold simultaneously. 

Indeed, if we take $A=a=\beta-1$, we have 
\begin{theorem} \label{1tokp}
Suppose $f\in L^p(\R^n)$ has $\widehat{f}|_{\Sigma}\in H^{\ell}(\Sigma)$, and $\phi\in C^{\infty}_c(\Sigma)$. If $\kappa_p\geq \beta+1$ and $\sigma_p>2\beta$,
    $\ell \geq \frac{\kappa_p}{\kappa_p-\beta}$ and $1\leq \beta\leq \frac{\kappa_p}{2}$, we have
\begin{align*} 
    \sup_{\|w\|_{L^{\infty}}\leq 1, b\in\R}\|\phi S_{u^{\beta-1+ib}w}f\|_{H^{-\beta+1}(\Sigma)}\lesssim \|f\|_{L^p(\R^n)}+\|\phi \widehat{f}\|_{H^{\ell}(\Sigma)}
\end{align*}
and 
\begin{align*} 
     \sup_{\|w\|_{L^{\infty}}\leq 1, b\in\R}\|\phi S_{u^{2\beta-1+ib}w}f\|_{H^{-2\beta+1-\ell}(\Sigma)}\lesssim \|f\|_{L^p(\R^n)}+\|\phi \widehat{f}\|_{H^{\ell}(\Sigma)}.
\end{align*}
\end{theorem}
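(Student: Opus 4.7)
The plan is to run the same bootstrap that produced \eqref{beta-1} and \eqref{beta/2-1} in the proof of Theorem \ref{main1}, but now with the parameter choice $a=A=\beta-1\geq 0$ in Lemma \ref{iterate1} (rather than $a=\beta-1<0=A$). Three ingredients will combine: Lemma \ref{iterate1} at this new value of $A$, the endpoint bound of Lemma \ref{apriori1}, and Stein's analytic interpolation in the complex power $u^{s+ib}$.

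First I would apply Lemma \ref{iterate1} with $a=A=\beta-1$. The hypotheses required there become $\kappa_p\geq 2a-A+2=\beta+1$, $\sigma_p>2a+2=2\beta$, and $A\geq a$ (with equality), which are precisely the standing assumptions of the theorem. The conclusion is
\begin{equation*}
\|\phi S_{u^{\beta-1+ib}w}f\|_{H^{-\beta+1}(\Sigma)}^{2}\lesssim \|\phi\widehat{f}\|_{H^{\ell}(\Sigma)}\sup_{\|w\|_{L^{\infty}}\leq 1}\|\phi S_{u^{2\beta-1+ib}w}f\|_{H^{-2\beta+2-\ell}(\Sigma)}+\|f\|_{L^p(\R^n)}^{2}.
\end{equation*}
Next I would interpolate (Stein / Lindel\"of) the analytic family $z\mapsto S_{u^{z-1+ib}w}f$ on the strip $\beta\leq \mathrm{Re}\,z\leq \kappa_p$, pairing the (yet unknown) bound at $z=\beta$ in $H^{-\beta+1}(\Sigma)$ with the endpoint estimate of Lemma \ref{apriori1} at $z=\kappa_p$ in $\dot{H}^{-\kappa_p}(\Sigma)$. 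Writing $2\beta-1=(1-\theta)(\beta-1)+\theta(\kappa_p-1)$ forces $\theta=\beta/(\kappa_p-\beta)$; the interpolated Sobolev order is $(1-\theta)(\beta-1)+\theta\kappa_p=2\beta-2+\tfrac{\kappa_p}{\kappa_p-\beta}$, which matches the right-hand side above exactly at the critical value $\ell=\tfrac{\kappa_p}{\kappa_p-\beta}$ (any larger $\ell$ is absorbed by monotonicity of the Sobolev norm in its exponent). Writing $X:=\sup_{\|w\|_{\infty}\leq 1,\,b\in\R}\|\phi S_{u^{\beta-1+ib}w}f\|_{H^{-\beta+1}(\Sigma)}$, interpolation then yields
\begin{equation*}
\sup_{\|w\|_{L^{\infty}}\leq 1}\|\phi S_{u^{2\beta-1+ib}w}f\|_{H^{-2\beta+2-\ell}(\Sigma)}\lesssim X^{1-\theta}\,\|f\|_{L^p(\R^n)}^{\theta}.
\end{equation*}

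Plugging this back into the Lemma \ref{iterate1} inequality produces the self-improving estimate $X^{2}\lesssim \|\phi\widehat{f}\|_{H^{\ell}}\,X^{1-\theta}\,\|f\|_{L^p}^{\theta}+\|f\|_{L^p}^{2}$. Splitting into the two scenarios exactly as in the passage from \eqref{itereq} to \eqref{beta-1}—either $X$ is already controlled by $\|f\|_{L^p}$ and we are done, or $X$ is dominant in which case $X^{1+\theta}\lesssim \|\phi\widehat{f}\|_{H^{\ell}}\,\|f\|_{L^p}^{\theta}$ and Young's inequality closes the loop—produces the first advertised bound $X\lesssim \|f\|_{L^p(\R^n)}+\|\phi\widehat{f}\|_{H^{\ell}(\Sigma)}$. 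Feeding this back into the interpolation estimate immediately yields the second bound. The main obstacle I anticipate is justifying Stein/Lindel\"of interpolation applied to the analytic family $S_{u^{z-1+ib}w}$ taking values in \emph{negative-order anisotropic} Bessel potential spaces: one must verify admissible growth in $|\mathrm{Im}\,z|$ (uniform in $b$, which is precisely what Lemmas \ref{apriori1} and \ref{iterate1} are formulated to provide) and logarithmic convexity of the target norms on the strip. Once this is in place, the arithmetic pinning down $\theta=\beta/(\kappa_p-\beta)$ and $\ell=\kappa_p/(\kappa_p-\beta)$, and the bootstrap closure, proceed verbatim from the $\beta\in(0,1]$ argument of Section \ref{mainproof}.
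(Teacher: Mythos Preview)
Your proposal is correct and follows essentially the same route as the paper: apply Lemma \ref{iterate1} with $a=A=\beta-1$, interpolate the outcome against the endpoint estimate of Lemma \ref{apriori1} in the power $u^{z-1+ib}$ (with $\theta=\beta/(\kappa_p-\beta)$, equivalently the paper's $1-\theta$), and close the self-improving inequality exactly as in the passage from \eqref{itereq} to \eqref{beta-1}. The only cosmetic differences are that you bootstrap on $X=\sup\|\phi S_{u^{\beta-1+ib}w}f\|_{H^{-\beta+1}}$ whereas the paper bootstraps on the ``doubled'' quantity $\sup\|\phi S_{u^{2\beta-1+ib}w}f\|_{H^{-2A-\ell}}$, and that your computation gives the Sobolev index $-2\beta+2-\ell$ (which is the value $-2A-\ell$ at $A=\beta-1$) rather than the stated $-2\beta+1-\ell$; this is just a typo in the statement, not a divergence of method.
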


For $\frac{\kappa_p}{2}\leq \beta\leq \kappa_p$, we would apply Lemma \ref{iterate1} to $a=\frac{\kappa_p}{2}-1$ and $2A+\ell = \kappa_p$. Therefore, one needs $2A=\kappa_p-\ell\geq \kappa_p-2=2a$, that is, $\ell\leq 2$, and the best possible we have is 
$$\| \phi S_{u^{(\kappa_p/2)-1+ib}w}f\|_{H^{-\frac{\kappa_p-\ell}{2}}(\Sigma)}\lesssim \|\phi \widehat{f}\|_{H^{\ell}(\Sigma)}+\|f\|_{L^p(\R^n)}$$
given that $\sigma_p>\kappa_p$ and $\kappa_p\geq \ell$. 

By writing 
$$\|\phi S_{u^{\beta-1+ib}w}f\|_{H^{-s}(\Sigma)} = \|\phi S_{u^{\frac{\kappa_p}{2}-1+ib}w}f\|_{H^{\frac{-\kappa_p+\ell}{2}}(\Sigma)}^{\theta}\|\phi S_{u^{\kappa_p-1+ib}w}f\|_{H^{-\kappa_p}(\Sigma)}^{1-\theta}$$
with $\beta=\frac{\kappa_p}{2}\theta +\kappa_p(1-\theta)$ and $s = (\frac{\kappa_p-\ell}{2})\theta +\kappa_p(1-\theta) = \frac{\ell\beta +\beta\kappa_p-\ell\kappa_p}{\kappa_p}$, we have 
$$\|\phi S_{u^{\beta-1+ib}w}f\|_{H^{-s}(\Sigma)} \lesssim \|f\|_{L^p(\R^n)}+\|\phi \widehat{f}\|_{H^{\ell}(\Sigma)}.$$


\begin{theorem} \label{kptokp1}
    Suppose $f\in L^p(\R^n)$ has $\widehat{f}|_{\Sigma}\in H^{\ell}(\Sigma)$, and $\phi\in C^{\infty}_c(\Sigma)$. If $\frac{\kappa_p}{2}\leq \beta \leq \kappa_p$, $\ell\leq \min\lbrace \kappa_p, 2\rbrace$, and $\sigma_p>\kappa_p$, 
then we have
$$\sup_{\|w\|_{L^{\infty}}\leq 1, b\in\R} \|\phi S_{u^{\beta-1+ib}w}f\|_{H^{-s}(\Sigma)}\lesssim \|f\|_{L^p(\R^n)}+\|\phi \widehat{f}\|_{H^{\ell}(\Sigma)},$$
where $s\geq \frac{\ell\beta +\beta\kappa_p-\ell\kappa_p}{\kappa_p}$.

\end{theorem}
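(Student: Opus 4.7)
The strategy is to reduce the problem to two endpoint estimates and interpolate between them in the $b$-parameter. The right endpoint, at $\beta = \kappa_p$, is supplied directly by Lemma \ref{apriori1}. The left endpoint, at $\beta = \kappa_p/2$, will be produced by a single application of Lemma \ref{iterate1} whose residual supremum is then closed off by Lemma \ref{apriori1} itself. With both endpoints in hand, Lindelöf's three-lines theorem (exactly as used to derive \eqref{itereq}) gives the full range $\beta \in [\kappa_p/2, \kappa_p]$.

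For the left endpoint I would invoke Lemma \ref{iterate1} with $a = \tfrac{\kappa_p}{2} - 1$ and $A = \tfrac{\kappa_p - \ell}{2}$, chosen so that $2a+1 = \kappa_p - 1$ and $2A + \ell = \kappa_p$. The hypothesis $A \geq a$ translates to $\ell \leq 2$; the hypothesis $A \geq 0$ to $\ell \leq \kappa_p$; the hypothesis $\sigma_p > 2a+2$ to $\sigma_p > \kappa_p$; and $\kappa_p \geq 2a - A + 2$ reduces to $A \geq 0$ and is automatic. These are precisely the hypotheses of the theorem. The lemma yields
\begin{equation*}
\|\phi S_{u^{\kappa_p/2 - 1 + ib}w}f\|_{H^{-(\kappa_p-\ell)/2}(\Sigma)}^{2} \lesssim \|\phi\widehat{f}\|_{H^{\ell}(\Sigma)} \sup_{\|w\|_{L^\infty}\leq 1} \|\phi S_{u^{\kappa_p - 1 + ib}w}f\|_{H^{-\kappa_p}(\Sigma)} + \|f\|_{L^p(\R^n)}^{2}.
\end{equation*}
Inserting the bound $\sup_{\|w\|_\infty \le 1}\|\phi S_{u^{\kappa_p - 1 + ib}w}f\|_{H^{-\kappa_p}} \lesssim \|f\|_{L^p}$ from Lemma \ref{apriori1} and extracting a square root gives, uniformly in $b$ and $w$,
\begin{equation*}
\|\phi S_{u^{\kappa_p/2 - 1 + ib}w}f\|_{H^{-(\kappa_p-\ell)/2}(\Sigma)} \lesssim \|\phi\widehat{f}\|_{H^{\ell}(\Sigma)} + \|f\|_{L^p(\R^n)}.
\end{equation*}

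With the two endpoint estimates in hand I would form the analytic family $z \mapsto \phi S_{u^{z-1}w}f$ on the strip $\mathrm{Re}\, z \in [\kappa_p/2, \kappa_p]$, with $b = \mathrm{Im}\, z$ free. The Lindelöf / three-lines theorem then produces, for $\beta = \theta \cdot \tfrac{\kappa_p}{2} + (1-\theta)\kappa_p$ and $s = \theta \cdot \tfrac{\kappa_p - \ell}{2} + (1-\theta)\kappa_p$,
\begin{equation*}
\|\phi S_{u^{\beta - 1 + ib}w}f\|_{H^{-s}(\Sigma)} \lesssim \|f\|_{L^p(\R^n)} + \|\phi\widehat{f}\|_{H^{\ell}(\Sigma)}.
\end{equation*}
Eliminating $\theta$ gives the equality $s = \tfrac{\ell\beta + \beta\kappa_p - \ell\kappa_p}{\kappa_p}$, and the inequality $s \geq \tfrac{\ell\beta + \beta\kappa_p - \ell\kappa_p}{\kappa_p}$ in the theorem statement then follows from the trivial embedding $H^{-s'}(\Sigma) \hookrightarrow H^{-s}(\Sigma)$ for $s \geq s'$. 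The principal obstacle is maintaining constants that are genuinely independent of $b$ through both Lemma \ref{iterate1} and Lemma \ref{apriori1}; this is precisely why those lemmas were formulated with the $+ib$ deformation from the outset, and the uniformity is what makes the Lindelöf interpolation legitimate.
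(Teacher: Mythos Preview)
Your proposal is correct and follows essentially the same approach as the paper: apply Lemma~\ref{iterate1} with $a=\tfrac{\kappa_p}{2}-1$ and $A=\tfrac{\kappa_p-\ell}{2}$ (so that $2A+\ell=\kappa_p$ lands on Lemma~\ref{apriori1}) to obtain the $\beta=\kappa_p/2$ endpoint, then interpolate via the three-lines argument with the $\beta=\kappa_p$ endpoint from Lemma~\ref{apriori1}. Your verification of the hypotheses ($A\ge a\Leftrightarrow\ell\le2$, $A\ge0\Leftrightarrow\ell\le\kappa_p$, $\sigma_p>2a+2\Leftrightarrow\sigma_p>\kappa_p$) matches the paper's, and your remark about $b$-uniformity is exactly the point that makes the interpolation legitimate.
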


On the other hand, instead of taking $2A+\ell=\kappa_p$, if we take $A=a=\frac{\kappa_p}{2}-1 $ and set $2A+\ell \geq \kappa_p$ (\textit{i.e.} $\ell\geq 2$), using the embedding of Sobolev spaces and Lemma \ref{apriori1} we have 
\begin{align*}
    \|\phi S_{u^{(\kappa_p/2)-1+ib}w}f\|_{H^{-\frac{\kappa_p}{2}+1}(\Sigma)}^2&\lesssim \|\phi \widehat{f}\|_{H^{\ell}(\Sigma)}\sup_{\|w\|_{L^{\infty}}\leq 1} \|\phi S_{u^{\kappa_p-1+bi}w}f\|_{H^{-2A-\ell}(\Sigma)}+\|f\|_{L^p(\R^n)}^2\\
    &\lesssim \|\phi \widehat{f}\|_{H^{\ell}(\Sigma)}\sup_{\|w\|_{L^{\infty}}\leq 1} \|\phi S_{u^{\kappa_p-1+bi}w}f\|_{H^{-\kappa_p}(\Sigma)}+\|f\|_{L^p(\R^n)}^2\\
    &\lesssim \|\phi \widehat{f}\|_{H^{\ell}}^2 + \|f\|_{L^p(\R^n)}^2
\end{align*}
with the conditions $\kappa_p\geq 2$ and $\sigma_p>\kappa_p$. Then, by writing 
$$\|\phi S_{u^{\beta-1+ib}w}f\|_{H^{-s}(\Sigma)} = \|\phi S_{u^{\frac{\kappa_p}{2}-1+ib}w}f\|_{H^{1-\frac{\kappa_p}{2}}(\Sigma)}^{\theta}\|\phi S_{u^{\kappa_p-1+ib}w}f\|_{H^{-\kappa_p}(\Sigma)}^{1-\theta}$$
with $\beta=\frac{\kappa_p}{2}\theta +\kappa_p(1-\theta)$ and $s = (\frac{\kappa_p}{2}-1)\theta +\kappa_p(1-\theta) = \frac{2\beta +\beta\kappa_p-2\kappa_p}{\kappa_p}$, we have 
$$\|\phi S_{u^{\beta-1+ib}w}f\|_{H^{-s}(\Sigma)} \lesssim \|f\|_{L^p(\R^n)}+\|\phi \widehat{f}\|_{H^{\ell}(\Sigma)}.$$
\begin{theorem}\label{kptokp2}
    Suppose $f\in L^p(\R^n)$ has $\widehat{f}|_{\Sigma}\in H^{\ell}(\Sigma)$, and $\phi\in C^{\infty}_c(\Sigma)$. If $\frac{\kappa_p}{2}\leq \beta \leq \kappa_p$, $\ell \geq 2$, $\kappa_p\geq 2$ and $\sigma_p>\kappa_p$, 
then we have
$$\sup_{\|w\|_{L^{\infty}}\leq 1, b\in\R}\|\phi S_{u^{\beta-1+ib}w}f\|_{H^{-s}(\Sigma)}\lesssim \|f\|_{L^p(\R^n)}+\|\phi \widehat{f}\|_{H^{\ell}(\Sigma)},$$
where $s\geq \frac{2\beta +\beta\kappa_p-2\kappa_p}{\kappa_p}$.
\end{theorem}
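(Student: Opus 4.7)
The plan is to treat Theorem \ref{kptokp2} as a parallel to Theorem \ref{kptokp1}, except that Lemma \ref{iterate1} is applied in the opposite size regime for $\ell$. Specifically, I would apply that lemma with $A = a = \tfrac{\kappa_p}{2} - 1$ and then take $2A + \ell \geq \kappa_p$ (rather than equality as in Theorem \ref{kptokp1}), so that the $H^{-2A-\ell}(\Sigma)$ norm appearing on the right-hand side is controlled by the stronger $H^{-\kappa_p}(\Sigma)$ norm handled by Lemma \ref{apriori1}. The result in terms of general $\beta \in [\kappa_p/2, \kappa_p]$ is then obtained by complex-interpolating the resulting midpoint bound against Lemma \ref{apriori1}, carrying the parameter $b$ uniformly through every step.

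Concretely, first I would verify the hypotheses of Lemma \ref{iterate1} at $a = A = \tfrac{\kappa_p}{2} - 1$: the condition $\kappa_p \geq 2a - A + 2 = \tfrac{\kappa_p}{2} + 1$ reduces to the given $\kappa_p \geq 2$, the inequality $\sigma_p > 2a + 2 = \kappa_p$ is given, and $A \geq a$ holds with equality. Since $\ell \geq 2$ gives $2A + \ell \geq \kappa_p$, the inclusion $H^{-\kappa_p}(\Sigma) \hookrightarrow H^{-(2A+\ell)}(\Sigma)$ is bounded with a constant independent of $b$. Combining Lemma \ref{iterate1} with Lemma \ref{apriori1} and taking square roots then yields the midpoint estimate
\begin{equation*}
\sup_{\|w\|_{L^\infty}\leq 1,\, b\in\R}\|\phi S_{u^{\kappa_p/2 - 1 + ib}w}f\|_{H^{-\kappa_p/2 + 1}(\Sigma)} \lesssim \|f\|_{L^p(\R^n)} + \|\phi \widehat{f}\|_{H^{\ell}(\Sigma)}.
\end{equation*}

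With this midpoint bound in hand, the second step is to complex-interpolate along the analytic family $z \mapsto S_{u^{z-1+ib}w}$ between the endpoint $z = \kappa_p/2$ (producing the $H^{-\kappa_p/2 + 1}$ bound just established) and $z = \kappa_p$ (producing the $H^{-\kappa_p}$ bound of Lemma \ref{apriori1}), exactly as in the derivation leading to Theorem \ref{kptokp1}. Writing $\beta = \theta(\kappa_p/2) + (1-\theta)\kappa_p$ gives $\theta = 2 - 2\beta/\kappa_p \in [0,1]$, and the interpolated Sobolev exponent comes out to
\begin{equation*}
s = \theta\big(\tfrac{\kappa_p}{2} - 1\big) + (1-\theta)\kappa_p = \frac{2\beta + \beta \kappa_p - 2\kappa_p}{\kappa_p},
\end{equation*}
which matches the conclusion of the theorem.

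The main obstacle is bookkeeping rather than a genuinely new idea: one must check that both endpoint bounds remain uniform in $b$ after the Sobolev embedding step (they do, since the embedding constant depends only on $2A + \ell - \kappa_p \geq 0$), and that the Lindel\"of-type analytic interpolation used earlier in the excerpt is applicable here with the same weight $(1+|\xi|)^{z + ib}$. Once these two points are handled exactly as in the proofs of Theorems \ref{main1} and \ref{kptokp1}, the statement of Theorem \ref{kptokp2} follows immediately.
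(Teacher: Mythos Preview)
Your proposal is correct and follows essentially the same route as the paper: apply Lemma~\ref{iterate1} with $A=a=\tfrac{\kappa_p}{2}-1$, use $\ell\geq 2$ so that $2A+\ell\geq\kappa_p$ and the Sobolev embedding feeds the right-hand side into Lemma~\ref{apriori1}, obtain the midpoint bound at $\beta=\kappa_p/2$, and then interpolate against Lemma~\ref{apriori1} to reach general $\beta\in[\kappa_p/2,\kappa_p]$ with $s=\frac{2\beta+\beta\kappa_p-2\kappa_p}{\kappa_p}$. The hypothesis checks and the interpolation arithmetic you wrote out match the paper's argument line by line.
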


\subsection{Proof of Theorem \ref{main2}}

To obtain Theorem \ref{main2}, we assume that \eqref{N1'} and \eqref{N2'} hold. 
If $\beta \leq s$, then one can apply Theorem \ref{prop1.1} to conclude that \eqref{HDR2} holds. In particular, $\beta \leq s$ includes the case $\kappa_p<\beta <\sigma_p$ due to the inequality $2\beta-s\leq \kappa_p$. In the remainder of this section, we only consider the cases $\beta > s$ and $0\leq \beta\leq \kappa_p$. 
Note that for fixed $\ell$, $\kappa_p$, and $\beta$, then $s\geq \max\lbrace 0, \beta -1, \beta -\ell, \frac{\ell\beta +\beta \kappa_p-\ell\kappa_p}{\kappa_p}\rbrace$ if $2\beta  \leq \kappa_p$; and $s\geq \max\lbrace 0, \beta -1, \beta -\ell, \frac{\ell\beta +\beta \kappa_p-\ell\kappa_p}{\kappa_p}, \frac{2\beta +\beta \kappa_p-2\kappa_p}{\kappa_p}, 2\beta-\kappa_p \rbrace$ if $\kappa_p<2\beta$. 

\subsubsection{When $0\leq \ell \leq 2$:}
The first subcase is $0\leq \beta \leq \min\lbrace \frac{\kappa_p\ell}{\kappa_p+\ell}, \frac{\kappa_p}{2}\rbrace.$ In this case, we always have $\sigma_p>\kappa_p\geq 2\beta$ and $\ell\geq \frac{\kappa_p\beta}{\kappa_p-\beta}$.

If $\kappa_p\leq 2$, then we have $\min\lbrace \frac{\kappa_p\ell}{\kappa_p+\ell}, \frac{\kappa_p}{2}\rbrace \leq 1$ and \eqref{HDR2} holds by Theorem \ref{main1}. 

If $\kappa_p\geq 2$ and $0\leq \ell\leq 1$, then $\min\lbrace \frac{\kappa_p\ell}{\kappa_p+\ell}, \frac{\kappa_p}{2}\rbrace =  \frac{\kappa_p\ell}{\kappa_p+\ell}\leq 1$, and Theorem \ref{main1} allows us to have \eqref{HDR2}. If $\kappa_p\geq 2$ and $1< \ell\leq 2$, then $\min\lbrace \frac{\kappa_p\ell}{\kappa_p+\ell}, \frac{\kappa_p}{2}\rbrace\geq 1$ whenever $\kappa_p\geq \frac{\ell}{\ell-1}$. When $2\leq \kappa_p\leq \frac{\ell}{\ell-1}$, for $0\leq \beta \leq  \min\lbrace \frac{\kappa_p\ell}{\kappa_p+\ell}, \frac{\kappa_p}{2}\rbrace\leq 1$, and we can apply Theorem \ref{main1}. When $\kappa_p \geq \frac{\ell}{\ell-1}$, we need to consider $0\leq \beta \leq 1$ and $1\leq \beta \leq \min\lbrace \frac{\kappa_p\ell}{\kappa_p+\ell}, \frac{\kappa_p}{2}\rbrace$. For $0\leq \beta \leq 1$, we apply Theorem \ref{main1}; for $1\leq \beta \leq \min\lbrace \frac{\kappa_p\ell}{\kappa_p+\ell}, \frac{\kappa_p}{2}\rbrace$, we also have $\kappa_p \geq 2\beta \geq 1+\beta$ and $\ell\geq \frac{\kappa_p \beta}{\kappa_p-\beta}\geq \frac{\kappa_p}{\kappa_p-\beta}$, thus we can apply Theorem \ref{1tokp}.

\noindent The second subcase is $\min\lbrace \frac{\kappa_p\ell}{\kappa_p+\ell}, \frac{\kappa_p}{2}\rbrace< \beta \leq \kappa_p$. 
 
 If $\min\lbrace \frac{\kappa_p\ell}{\kappa_p+\ell}, \frac{\kappa_p}{2}\rbrace = \frac{\kappa_p}{2}$, then $\ell\geq\kappa_p$. Then for $\beta\in (\frac{\kappa_p}{2},\kappa_p]$, one just requires $s\geq 2\beta-\kappa_p$ in order to have \eqref{HDR2}. More precisely, if $\frac{\kappa_p}{2}\leq 1$ (\textit{i.e.} $\kappa_p\leq 2$ only, other situations imply $\kappa_p\geq \ell$), one has $\sup_{\|w\|_{L^{\infty}}\leq 1}\|\phi S_{u^{\kappa_p/2-1+ib}w}f\|_{L^2(\Sigma)}\lesssim \|f\|_{L^p(\R^n)}+\|\phi \widehat{f}\|_{H^{\kappa_p}(\R^n)}$. By interpolation, we have 
$$\|\phi S_{u^{\beta-1+ib}w}f\|_{H^{-s}(\Sigma)}\leq \|\phi S_{u^{\kappa_p/2-1+ib}w}f\|_{L^2(\Sigma)}^{\theta}\|\phi S_{u^{\kappa_p-1+ib}w}f\|_{H^{-\kappa_p}(\Sigma)}^{1-\theta}$$
with $\beta = \frac{\kappa_p}{2}(\theta)+\kappa_p(1-\theta)$ and $s\geq 0(\theta)+\kappa_p(1-\theta) = 2\beta - \kappa_p$. Thus, we can conclude that \eqref{HDR2} holds if $\beta\in (\frac{\kappa_p}{2},\kappa_p]$ and $\min\lbrace \frac{\kappa_p\ell}{\kappa_p+\ell}, \frac{\kappa_p}{2}\rbrace = \frac{\kappa_p}{2}$.

If $\min\lbrace \frac{\kappa_p\ell}{\kappa_p+\ell}, \frac{\kappa_p}{2}\rbrace = \frac{\kappa_p\ell}{\kappa_p+\ell}$, then $\kappa_p\geq \ell$. If $\frac{\kappa_p\ell}{\kappa_p+\ell}\leq 1$ (\textit{i.e.} when $\kappa_p\leq 2$, or $\kappa_p\geq 2$ and $\ell\in [0,1]$, or $2\leq \kappa_p\leq \frac{\ell}{\ell-1}$ and $\ell\in(1,2]$), then using interpolation as above with $\beta=\frac{\kappa_p\ell}{\kappa_p+\ell}(\theta)+\kappa_p(1-\theta)$ and $s\geq \kappa_p(1-\theta) = \frac{\beta\ell+\beta\kappa_p-\ell\kappa_p}{\kappa_p}$, we can conclude that \eqref{HDR2} holds. We now consider $\kappa_p\geq \frac{\ell}{\ell-1}$ with $\ell\in(1,2]$. In this case, $\frac{\kappa_p\ell}{\kappa_p+\ell}\leq \frac{\ell-1}{\ell}\kappa_p\leq \frac{\kappa_p}{2}$. For $\frac{\kappa_p}{2}\leq \beta\leq \kappa_p$, we can directly apply Theorem \ref{kptokp1}. For $\frac{\kappa_p\ell}{\kappa_p+\ell}\leq \beta\leq \frac{\ell-1}{\ell}\kappa_p$, we have $\ell\geq \frac{\kappa_p}{\kappa_p-\beta}$ and $ \max\lbrace 0, \beta -1, \beta -\ell, \frac{\ell\beta +\beta \kappa_p-\ell\kappa_p}{\kappa_p}\rbrace=\beta -1$, we can apply Theorem \ref{1tokp}. For $\frac{\ell-1}{\ell} \kappa_p\leq \beta\leq  \frac{\kappa_p}{2}$, we have $ \max\lbrace 0, \beta -1, \beta -\ell, \frac{\ell\beta +\beta \kappa_p-\ell\kappa_p}{\kappa_p}\rbrace=\frac{\ell\beta +\beta \kappa_p-\ell\kappa_p}{\kappa_p}$. Therefore, by interpolation as in the case above with $\beta=\frac{\ell-1}{\ell}\kappa_p(\theta)+\frac{\kappa_p}{2}(1-\theta)$ and $s\geq \frac{\ell\beta +\beta \kappa_p-\ell\kappa_p}{\kappa_p} $, we can conclude that \eqref{HDR2} holds.

 \subsubsection{When $\ell\geq 2$:}
The first subcase is $0\leq \beta \leq \frac{\kappa_p}{2}$. We have $\sigma_p>\kappa_p\geq 2\beta$ only.

If $\kappa_p \leq 2$, then observe that $\ell \geq 2 \geq \kappa_p \geq \frac{\kappa_p \beta}{\kappa_p-\beta}$. This allows us to apply Theorem \ref{main1}. 

If $\kappa_p\geq 2$, for $0\leq \beta \leq 1$, we have $\ell \geq2\geq  \frac{\kappa_p}{\kappa_p-1}\geq \frac{\kappa_p\beta}{\kappa_p-\beta}$, and \eqref{HDR2} holds by using Theorem \ref{main1}. For $1\leq \beta\leq \frac{\kappa_p}{2}$, we also have $\ell \geq 2\geq \frac{\kappa_p}{\kappa_p-\beta}$ and $\kappa_p\geq 2\beta \geq \beta +1$. Therefore,  \eqref{HDR2} follows from Theorem \ref{1tokp}.

\noindent The second subcase is $\frac{\kappa_p}{2}\leq \beta\leq \kappa_p$.

If $\kappa_p \geq 2$,  \eqref{HDR2} follows from Theorem \ref{kptokp2} directly.

If $\kappa_p <2$, we have $\frac{\kappa_p}{2}<1$ and $\ell \geq 2\geq \kappa_p$. In fact, we are in the same situation as in the subcase $\min\lbrace \frac{\kappa_p\ell}{\kappa_p+\ell}, \frac{\kappa_p}{2}\rbrace = \frac{\kappa_p}{2}$ in $0\leq \ell\leq 2$. Therefore, we can conclude that \eqref{HDR2} holds.

This finishes the proof of Theorem \ref{main2}.

\begin{remark} We can see that the conditions on $s$ are sufficient to have \eqref{HDR2} and sharp in all cases but $\ell\geq 2$ and $1\leq \frac{\kappa_p}{2}\leq \beta \leq \kappa_p$. It is not sharp in the sense that we additionally assumed  $s\geq \frac{2\beta+\beta\kappa_p-2\kappa_p}{\kappa_p}$, of which the lower bound is strictly larger than $\frac{\ell\beta+\beta\kappa_p-\ell\kappa_p}{\kappa_p}$ if $\ell>2$.  See Figures \ref{fig_f1} and \ref{fig_f2} for more explanations.
\end{remark}

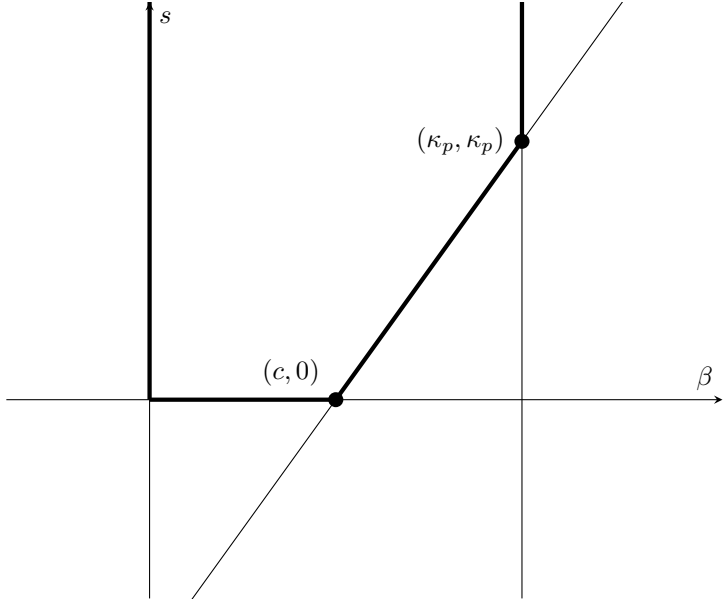
\begin{figure}
    \centering
    
\begin{tikzpicture}
\begin{axis}[
    axis lines = center,
    xlabel = \(\beta\),
    ylabel = {\(s\)},
    xmin=-0.5, xmax=2,
    ymin=-1, ymax=2,
    xtick={0},
    ytick={0},
    grid style=dashed,
    samples=100, 
]
\foreach \k in {1.3}{
\foreach \l in {1.6}{
\addplot[domain=0:4]{2*x-\k};
\addplot[mark=*] coordinates {(\k,\k)};
\addplot[mark=*] coordinates {(\k/2,0)};
\addplot [mark=none] coordinates {(\k, -1) (\k, 5)};
\node[label={180:{$(\kappa_p,\kappa_p)$}},circle,fill,inner sep=2pt] at (axis cs:1.3,1.3) {};
\node[label={145:{$(c,0)$}},circle,fill,inner sep=2pt] at (axis cs:1.3/2,0) {};
\addplot[ultra thick, , mark=none] coordinates {(0,0) (0,3)};
\addplot[ultra thick, , mark=none] coordinates {(\k,\k) (\k,3)};
\addplot[ultra thick, , mark=none] coordinates {(0,0) (1.3/2,0)};
\addplot[ultra thick, , mark=none] coordinates {(1.3/2,0) (\k,\k)};
}
}
\end{axis}
\end{tikzpicture}
    \caption{When $c= \min\lbrace \frac{\kappa_p\ell}{\kappa_p+\ell}, \frac{\kappa_p}{2}\rbrace\leq 1$, we directly interpolate the points $(c,0)$ and $(\kappa_p,\kappa_p)$. The region bounded by the bold lines is the region of all exponents that we can obtain, and it is optimal. The case $\kappa_p\leq 2$ and $\ell\geq 0$, the case $\kappa_p \geq 2$ and $\ell \in [0,1]$, and the case $\kappa_p\in [2, \frac{\ell}{\ell-1}]$ and $\ell \in (1,2]$ are in this situation.}
\end{figure}

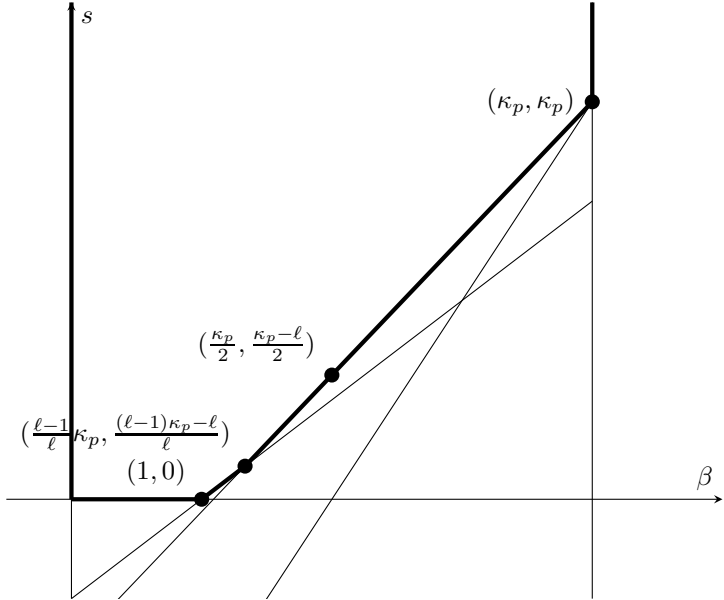
\begin{figure}
    \centering
    
\begin{tikzpicture}
\begin{axis}[
    axis lines = center,
    xlabel = \(\beta\),
    ylabel = {\(s\)},
    xmin=-0.5, xmax=5,
    ymin=-1, ymax=5,
    xtick={0},
    ytick={0},
    grid style=dashed,
    samples=100, 
]
\addplot[domain = 0:4]{x -1};
\foreach \k in {4}{
\foreach \l in {1.5}{
\addplot[domain=0:4]{2*x-\k};
\addplot[domain=0:4]{(\l * x + x * \k- \l * \k)/(\k)};
\addplot[mark=*] coordinates {(\k,\k)};
\node[label={145:{$(1,0)$}},circle,fill,inner sep=2pt] at (axis cs:1,0) {};
\addplot [mark=none] coordinates {(\k, -1) (\k, 5)};
\node[label={180:{$(\kappa_p,\kappa_p)$}},circle,fill,inner sep=2pt] at (axis cs:4,4) {};
\node[label={125:{$(\frac{\ell-1}{\ell}\kappa_p,\frac{(\ell-1)\kappa_p-\ell}{\ell})$}},circle,fill,inner sep=2pt] at (axis cs:4/3,1/3) {};
\node[label={145:{$(\frac{\kappa_p}{2},\frac{\kappa_p-\ell}{2})$}},circle,fill,inner sep=2pt] at (axis cs:2,5/4) {};
\addplot[ultra thick, , mark=none] coordinates {(0,0) (0,5)};
\addplot[ultra thick, , mark=none] coordinates {(\k,\k) (\k,5)};
\addplot[ultra thick, , mark=none] coordinates {(0,0) (1,0)};
\addplot[ultra thick, , mark=none] coordinates {(4/3,1/3) (1,0)};
\addplot[ultra thick, , mark=none] coordinates {(4/3,1/3) (\k,\k)};
}
}
\end{axis}
\end{tikzpicture}
    \caption{The case $\kappa_p\geq \frac{\ell}{\ell-1}$ and $\ell\in (1,2]$. The line joining $(1,0)$ and $(\frac{\ell-1}{\ell}\kappa_p,\frac{(\ell-1)\kappa_p-\ell}{\ell})$ is $s=\beta -1$, and the line joining $(\frac{\ell-1}{\ell}\kappa_p,\frac{(\ell-1)\kappa_p-\ell}{\ell})$ and $(\kappa_p,\kappa_p)$ is $s=\frac{\ell\beta -\ell\kappa_p+\kappa_p\beta}{\kappa_p}$. Theorem \ref{kptokp1} allows us to obtain the exponents between  $(\frac{\kappa_p}{2},\frac{(\kappa_p-\ell)}{2})$ and $(\kappa_p,\kappa_p)$, and an interpolation argument allows us to  obtain the exponents between $(\frac{\ell-1}{\ell}\kappa_p, \frac{\ell-1}{\ell}\kappa_p-1)$ and $(\frac{\kappa_p}{2},\frac{(\kappa_p-\ell)}{2})$. The region bounded by the bold lines is the region of all exponents that we can obtain, and it is optimal. }
\end{figure}

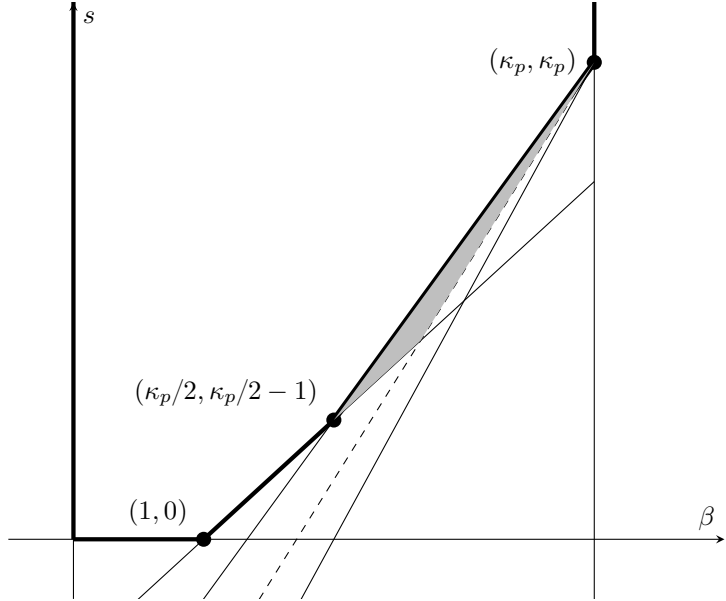
\begin{figure}
    \centering    
\begin{tikzpicture}
\begin{axis}[
    axis lines = center,
    xlabel = \(\beta\),
    ylabel = {\(s\)},
    xmin=-0.5, xmax=5,
    ymin=-0.5, ymax=4.5,
    xtick={0},
    ytick={0},
    grid style=dashed,
    samples=100, 
]
\addplot[domain = 0:4]{x -1};
\foreach \k in {4}{
\foreach \l in {3}{
\addplot[domain=0:4]{2*x-\k};
\addplot[domain=0:4,dashed]{(\l * x + x * \k- \l * \k)/(\k)};
\addplot[domain=0:4]{(2 * x + x * \k- 2 * \k)/(\k)};
\addplot[ultra thick, , mark=none] coordinates {(0,0) (0,5)};
\addplot[ultra thick, , mark=none] coordinates {(\k,\k) (\k,5)};
\addplot[ultra thick, , mark=none] coordinates {(0,0) (1,0)};
\addplot[ultra thick, , mark=none] coordinates {(\k/2,\k/2-1) (1,0)};
\addplot[line width=2.3pt, , mark=none] coordinates {(\k/2,\k/2-1) (\k,\k)};
\node[label={145:{$(1,0)$}},circle,fill,inner sep=2pt] at (axis cs:1,0) {};
\addplot [mark=none] coordinates {(\k, -1) (\k, 5)};
\node[label={180:{$(\kappa_p,\kappa_p)$}},circle,fill,inner sep=2pt] at (axis cs:4,4) {};
\node[label={145:{$(\kappa_p/2,\kappa_p/2-1)$}},circle,fill,inner sep=2pt] at (axis cs:4/2,1) {};
\addplot[fill=lightgray, draw=none] coordinates {(\k/2,\k/2-1) (\k,\k) (8/3,5/3)};

}
}
\end{axis}
\end{tikzpicture}
    \caption{This figure shows a case of $\ell \geq 2$, $\kappa_p\geq 2$, and $\kappa_p\geq \ell$. The line connecting $(1,0)$ and $(\kappa_p/2, \kappa_p/2-1)$ is $s=\beta -1$, the line joining $(\kappa_p/2, \kappa_p/2-1)$ and $(\kappa_p,\kappa_p)$ is $s=\frac{2\beta -2\kappa_p+\kappa_p\beta}{\kappa_p}$, the dashed line is $s=\frac{\ell\beta -\ell\kappa_p+\kappa_p\beta}{\kappa_p}$, and the line below the dashed line is $s=2\beta -\kappa_p$. The region bounded by the bold lines is the region of all possible exponents that we can obtain, and the shaded region is the region that we cannot obtain using  Theorem \ref{kptokp2}, as we are not able to obtain \eqref{HDR2} for $(\frac{\ell-1}{\ell}\kappa_p, \frac{\ell-1}{\ell}\kappa_p-1)$.}
    \label{fig_f1}
\end{figure}

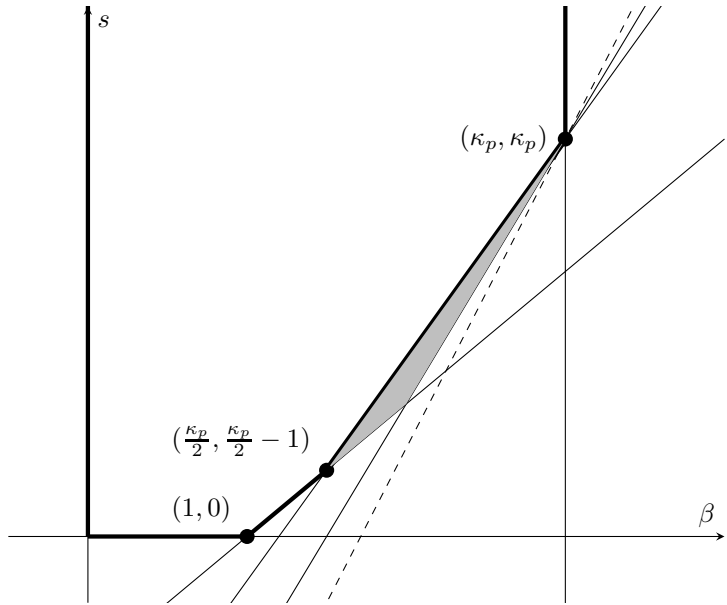
\begin{figure}
    \centering
    
\begin{tikzpicture}
\begin{axis}[
    axis lines = center,
    xlabel = \(\beta\),
    ylabel = {\(s\)},
    xmin=-0.5, xmax=4,
    ymin=-0.5, ymax=4,
    xtick={0},
    ytick={0},
    grid style=dashed,
    samples=100, 
]
\addplot[domain = 0:4]{x -1};
\foreach \k in {3}{
\foreach \l in {4}{
\addplot[domain=0:4]{2*x-\k};
\addplot[domain=0:4,dashed]{(\l * x + x * \k- \l * \k)/(\k)};
\addplot[domain=0:4]{(2 * x + x * \k- 2 * \k)/(\k)};
\addplot[ultra thick, , mark=none] coordinates {(0,0) (0,5)};
\addplot[ultra thick, , mark=none] coordinates {(\k,\k) (\k,5)};
\addplot[ultra thick, , mark=none] coordinates {(0,0) (1,0)};
\addplot[ultra thick, , mark=none] coordinates {(\k/2,\k/2-1) (1,0)};
\addplot[line width=2.3pt, , mark=none] coordinates {(\k/2,\k/2-1) (\k,\k)};
\addplot[mark=*] coordinates {(\k,\k)};
\addplot[mark=*] coordinates {(\k/2,\k/2-1)};
\node[label={145:{$(1,0)$}},circle,fill,inner sep=2pt] at (axis cs:1,0) {};
\addplot [mark=none] coordinates {(\k, -1) (\k, 5)};
\node[label={180:{$(\kappa_p,\kappa_p)$}},circle,fill,inner sep=2pt] at (axis cs:3,3) {};
\node[label={145:{$(\frac{\kappa_p}{2},\frac{\kappa_p}{2}-1)$}},circle,fill,inner sep=2pt] at (axis cs:3/2,1/2) {};
\addplot[fill=lightgray, draw=none] coordinates {(\k/2,\k/2-1) (\k,\k) (2,1)};
}
}
\end{axis}
\end{tikzpicture}
    \caption{This figure shows a case of $\ell \geq 2$, $\kappa_p\geq 2$, and $ \ell\geq \kappa_p$. The line connecting $(1,0)$ and $(\kappa_p/2, \kappa_p/2-1)$ is $s=\beta -1$, the line joining $(\kappa_p/2, \kappa_p/2-1)$ and $(\kappa_p,\kappa_p)$ is $s=\frac{2\beta -2\kappa_p+\kappa_p\beta}{\kappa_p}$, and the dashed line is $s=\frac{\ell\beta -\ell\kappa_p+\kappa_p\beta}{\kappa_p}$, which is below the line $s=2\beta -\kappa_p$. Although $s=\frac{\ell\beta -\ell\kappa_p+\kappa_p\beta}{\kappa_p}$ does not come into the play, we cannot obtain the exponents in the shaded region because we are not able to obtain \eqref{HDR2} for $(\kappa_p-1, \kappa_p-2)$ due to the limitation of Lemma \ref{iterate1}.}
     \label{fig_f2}
\end{figure}

\section{Proof of Corollary \ref{coro1}}\label{proofcoro}
\subsection{Kernel of $T^*T$}
Assuming $f,g\in \mathcal{S}(\R^n)$,  if $\gamma \in (0,\frac{n-1}{2})$, we have
\begin{align*}
    &\lan  T(f)(\xi,\eta) \phi(\xi), T(g)(\xi,\eta)\phi(\xi)\ran_{\dot{H}^{-\gamma}_{\xi}H^{\tau}_{\eta}}  \\
    &=  4\pi^2C' \int_{\R} \int_{U} \int_{U}(1+|r|)^{2\tau}\bigg( \int_{\R^{n}}  s_r(x_n)  e^{-2\pi i (x'\cdot \xi + x_n h(\xi))} f(x',x_n)dx'dx_n\bigg) \\
    &\quad\quad \times  \bigg( \int_{\R^{n}} s_r(y_n)  e^{2\pi i (y'\cdot \zeta + y_n h(\zeta))} \overline{g(y',y_n)}dy'dy_n\bigg) \phi(\xi)\overline{\phi(\zeta)} |\xi-\zeta|^{2\gamma -n+1} dr d\zeta d\xi \\
    &= \int_{\R^n}\int_{\R^n} K_{-\gamma,\tau}(x,y)f(x)g(y)dxdy,
\end{align*}
where 
$$ K_{-\gamma,\tau}(x,y)=
     4\pi^2 C' \tau_{min}(x_n,y_n) \widecheck{\phi^2}_{-\gamma,\Sigma}(x,y) $$
and
$$\tau_{min}(x_n,y_n)= \chi_{[0,\infty)}(x_ny_n)\frac{(1+\min\lbrace|x_n|,|y_n|\rbrace)^{1+2\tau}-1}{1+2\tau}.$$

When $\gamma =0$ and $\tau=0$, one has 
$$\langle T(f)(\xi,\eta)\phi(\xi), T(g)(\xi,\eta) \phi(\xi)\rangle_{L^2_{\xi}L^2_{\eta}}=\int_{\R^n\times \R^n} K_{0,0}(x,y)f(x)g(y)dxdy, $$
where 
$$K_{0,0}(x,y) = 4\pi^2 s_{min}(x_n,y_n) \widecheck{\phi^2}_{0,\Sigma}(x,y)$$
and 
$$s_{min}(x_n,y_n) = \chi_{[0,\infty)}(x_ny_n)\min\lbrace|x_n|,|y_n|\rbrace.$$

\subsection{Main proof}
Fix $1<p\leq \frac{2n+2}{n+5}$. To start with, we write
\begin{align*}
    &|\lan T(f)\phi, T(g) \phi\ran_{L^2_{\xi}L^2_{\eta}}| \nonumber\\
    &= \bigg|\int_{\R}\int_{\R^{n-1}}\int_{\R}\int_{\R^{n-1}} K(x,y)f(x',x_n)g(y',y_n)dx'dy'dx_ndy_n\bigg|\nonumber\\
    &\leq  \bigg|\int_{\R}\int_{\R^{n-1}}\int_{\R}\int_{\R^{n-1}} \bigg[K(x,y)-(4\pi^2)\frac{|x_n|+|y_n|}{2}\widecheck{\phi^2}_{0,\Sigma}(x,y)\bigg]f(x',x_n)g(y',y_n)dx'dy'dx_ndy_n\bigg|\nonumber\\
    &\quad +(4\pi^2) \bigg|\int_{\R\times \R}\int_{\R^{n-1}\times \R^{n-1}} \frac{|x_n|+|y_n|}{2}\widecheck{\phi^2}_{0,\Sigma}(x,y) f(x',x_n)g(y',y_n)dx'dy'dx_ndy_n\bigg|\nonumber\\
    &=: (I)+ (II).
\end{align*}

We first estimate $(I)$. Note that 
$$\bigg|s_{min}(x_n,y_n)-\frac{|x_n|+|y_n|}{2}\bigg|\leq C|x_n-y_n|$$
because when $x_ny_n<0$ we have $|x_n|+|y_n|=|x_n-y_n|$ while $s_{min}(x_n,y_n)=0$. To proceed, we define the operator $$\mathcal{T}(g)(x',x_n;y_n):= \int_{\R^{n-1}}   \widecheck{\phi^2}_{0,\Sigma}(x,y)g(y')dy'=\mathcal{F}^{-1}(e^{-2\pi i (x_n-y_n)|\xi|^2}|\phi(\xi)|^2\widehat{g})(x')$$ for $g\in L^1(\R^{n-1})$. Note that the last equality holds because the kernel of $\mathcal{T}$ is of convolution type (\textit{i.e.} $\widecheck{\phi^2}_{0,\Sigma}(x,y)=\widecheck{\phi^2}_{0,\Sigma}(x-y)$). Since $\phi \in C^{\infty}_c(U)$, one has 
$$\|\mathcal{T}(g)(\cdot, x_n;y_n)\|_{L^{p}(\R^{n-1})}\lesssim_{\phi} (1+|x_n-y_n|)^{-(n-1)(\frac{1}{p}-\frac12)}\|f\|_{L^p(\R^{n-1})}.$$
Then, using Hardy-Littlewood-Sobolev inequality with $1<p\leq \frac{2n+2}{n+5}$, we have
\begin{align*}
    |(I)|&\lesssim \int_{\R\times \R}|x_n-y_n|\int_{\R^{n-1}}|f(x',x_n)||\mathcal{T}[g(\cdot,y_n)](x')|dx' dy_ndx_n\\
    &\lesssim \int_{\R\times \R} |x_n-y_n| \|f(\cdot, x_n)\|_{L^p(\R^{n-1})} (1+|x_n-y_n|)^{-(n-1)(\frac{1}{p}-\frac{1}{2})} \|g(\cdot, y_n)\|_{L^p(\R^{n-1})}dx_ndy_n \\
    &\lesssim \|f\|_{L^p_{x'}L^p_{x_n}} \|g\|_{L^p_{y'}L^p_{y_n}}. 
\end{align*}

To estimate $(II)$, it suffices to consider 
$$\int_{\R^n\times \R^n} \frac{|x_n|}{2}\widecheck{\phi^2}_{0,\Sigma}(x,y)f(x)g(y)dxdy=\lan \phi \, \widehat{|x_n|f}, \phi \,\widehat{g}\ran_{L^2_{\xi}}.$$
By duality and \eqref{beta-1} with $\beta=\frac12$, we have 
\begin{align*}
    |\lan \phi \, \widehat{|x_n|f}, \phi \,\widehat{g}\ran_{L^2_{\xi}}|&\lesssim \|\phi \widehat{|x_n|f}\|_{H^{-\ell}(\Sigma)} \|\phi \widehat{g}\|_{H^{\ell}(\Sigma)}\\
    &\lesssim (\|\phi \widehat{f}\|_{H^{\ell}(\Sigma)}+\|f\|_{L^p(\R^n)})(\|\phi \widehat{g}\|_{H^{\ell}(\Sigma)}+\|g\|_{L^p(\R^n)}).
\end{align*}
Thus, we can conclude that 
\begin{align*}
    |\lan T(f)\phi, T(g)\phi\ran_{L^2_{\xi}L^2_{\eta}}|\lesssim (\|\phi \widehat{f}\|_{H^{\ell}(\Sigma)}+\|f\|_{L^p(\R^n)})(\|\phi \widehat{g}\|_{H^{\ell}(\Sigma)}+\|g\|_{L^p(\R^n)}).
\end{align*}

One can show the case $p=1$ similarly.

\begin{remark}
    The \textit{a priori} bound for $Tf$ is $\|\phi(\xi) (Tf)(\xi,\eta)\|_{\dot{H}^{-\gamma}_{\xi} H^{\tau}_{\eta}}\leq C\|f\|_{L^p(\R^n)}$
with $\frac{1+2\tau}{2}\leq \gamma <\frac{n-1}{2}$, $1< p< \min\lbrace \frac{2n}{n+2+2\tau}, \frac{2n+2}{n+5-2\gamma+4\tau}\rbrace$ or $p=\frac{2n+2}{n+5-2\gamma+4\tau}$. In particular, if $\gamma= \kappa_p$ and $\tau=\kappa_p-\frac{1}{2}$, then
$$\|\phi(\xi)(Tf)(\xi,\eta)\|_{H^{-\kappa_p}_{\xi} H^{\kappa_p-\frac{1}{2}}_{\eta}}\leq C\|f\|_{L^p(\R^n)}$$
with $p\in (1, \frac{2(n+1)}{n+4}]$ as $\tau \geq 0.$
\end{remark}

\section{A version of $T$}\label{OE}
Let us consider the operator 
$$T_{\alpha}f(\xi,\eta)=\frac{\widehat{f}(\xi, h(\xi)+\eta)-\widehat{f}(\xi,h(\xi))}{|\eta|^{\alpha}}.$$
We can express $T_{\alpha}f$ in terms of $Tf$. More precisely, we have
\begin{align*}
    T_{\alpha}f(\xi,\eta) = \frac{\eta}{|\eta|^{\alpha}}Tf(\xi,\eta). 
\end{align*}

When $\alpha =1$, it follows immediately from Corollary \ref{coro1} that 
$$\|T_{1}f\|_{L^2_{\xi}L^2_{\eta}}\lesssim\|f\|_{L^p(\R^n)}+\|\phi\widehat{f}\|_{H^{\ell}(\Sigma)}.$$


The following examples show that it is impossible to have $L^2_{\xi}L^2_{\eta}$ bound for $|\alpha-1|\geq \frac{1}{2}.$
\begin{eg}
Let $h(\xi)=|\xi|^2$. Consider $f(x)=e^{-|x|^2}$ with $\alpha \geq \frac{3}{2}$. We will show that it is impossible to obtain $L^2_{\xi}L^2_{\eta}$. Note that 
    $$T_{\alpha}f(\xi,\eta)= \frac{e^{-|\xi|^2-(|\xi|^2+\eta)^2}-e^{-|\xi|^2-|\xi|^4}}{|\eta|^{\alpha}}=e^{-|\xi|^2-|\xi|^4}\frac{e^{-2\eta|\xi|^2-\eta^2}-1}{|\eta|^{\alpha}}$$
    When $|\eta|<\frac{|\xi|}{1000}$, then
    \begin{align} \label{eg1}
        |T_{\alpha}f(\xi,\eta)|\geq e^{-|\xi|^2-|\xi|^4} \frac{c|\xi|^2e^{-c'|\xi|^4}}{|\eta|^{\alpha-1}} 
    \end{align}
    for some $c,c'>0$. When we square \eqref{eg1} both sides and integrate it with respect to $\eta \in (-\frac{|\xi|}{1000}, \frac{|\xi|}{1000})$, the integral will diverge because $2\alpha-2\in [1,\infty)$. 

    On the other hand, let us consider $\alpha \leq \frac{1}{2}$ with the same function $f$. If $\eta$ is sufficiently large so that $e^{-2\eta|\xi|^2-\eta^2} \leq \frac{1}{2}$, then 
    \begin{align} \label{eg2}
        |T_{\alpha}f(\xi,\eta)|\geq e^{-|\xi|^2-|\xi|^4} \frac{1}{2|\eta|^{\alpha}}
    \end{align}
    After squaring and integrating \eqref{eg2} with respect to $\eta$ in that range, this integral will diverge as $2\alpha \leq 1.$
\end{eg}

Now we focus on the case $\alpha \in (\frac{1}{2},\frac{3}{2})$. Let us compute the kernel of $T_{\alpha}^*T_{\alpha}$ on $L^2_{\xi}L^2_{\tau}$.
$$\langle T_{\alpha}(f)(\xi,\eta)\phi(\xi), T_{\alpha}(g)(\xi,\eta) \phi(\xi)\rangle_{L^2_{\xi}L^2_{\eta}}=\int_{\R^n\times \R^n} K^{\alpha}_{0,0}(x,y)f(x)g(y)dxdy, $$
where 
$$K^{\alpha}_{0,0}(x,y) = I(x_n,y_n) \widecheck{\phi^2}_{0,\Sigma}(x,y)$$
and 
$$I(x_n,y_n) = \int_{\R} \frac{(e^{-2\pi i x_n\eta}-1)(e^{2\pi i y_n\eta}-1)}{|\eta|^{2\alpha}}d\eta.$$

By considering the principal value of $I(x_n,y_n)$ and using change of variables, 
\begin{align*}
    I(x_n,y_n)&=\int_{\R} \frac{\cos(2\pi (x_n-y_n)\eta)-\cos(2\pi y_n\eta)-\cos(2\pi x_n \eta)+1}{|\eta|^{2\alpha}}d\eta\\
    &= C_{\alpha}\big[|x_n|^{2\alpha-2}x_n +|y_n|^{2\alpha-2}y_n-|x_n-y_n|^{2\alpha-2}(x_n-y_n) \big],
\end{align*}
where $C_{\alpha}=\int_{\R}\frac{1-\cos(2\pi t)}{|t|^{2\alpha}}dt <\infty$ as $\alpha\in (\frac{1}{2},\frac{3}{2}).$

Therefore, we can write 
\begin{align*}
    &\langle T_{\alpha}(f)(\xi,\eta)\phi(\xi), T_{\alpha}(g)(\xi,\eta) \phi(\xi)\rangle_{L^2_{\xi}L^2_{\eta}} \\
    &=\int_{\R^n\times \R^n} C_{\alpha}\big[|x_n|^{2\alpha-2}x_n +|y_n|^{2\alpha-2}y_n]\widecheck{\phi^2}_{0,\Sigma}(x,y)f(x)g(y)dxdy \\
    &\quad - C_{\alpha}\int_{\R^n \times \R^n} [|x_n-y_n|^{2\alpha-2}(x_n-y_n)]\widecheck{\phi^2}_{0,\Sigma}(x,y)f(x)g(y)dxdy \\
    &=: (I)+(II).
\end{align*}
Using the Hardy-Littlewood-Sobolev inequality, for $1\leq p\leq \frac{2n+2}{n+4\alpha+1}$, we have 
$$|(II)|\lesssim \|f\|_{L^p(\R^n)} \|g\|_{L^p(\R^n)}.$$

To estimate $(I)$, it suffices to consider the case for $x_n$. Indeed, we can write 
\begin{align*}
    \int_{\R^n} |x_n|^{2\alpha-2}x_n f(x)g(y)dy &\simeq \langle S_{w(r)r^{2\alpha-2}}f \phi, \widehat{g} \phi \rangle_{L^2(\Sigma)},
\end{align*}
where $w(r) = i(\sgn(r))^{2\alpha-3}$.  Using Equation \eqref{beta-1} with $\beta = \alpha-\frac{1}{2} \in (\frac{1}{2},1)$, we have 
\begin{align*}
    \bigg|\int_{\R^n} |x_n|^{2\alpha-2}x_n f(x)g(y)dy\bigg| &\lesssim (\|f\|_{L^p(\R^n)}+\|\phi \widehat{f}\|_{H^{\ell}(\Sigma)})\|\phi \widehat{g}\|_{H^{\ell}(\Sigma)},
\end{align*}
for the case $\ell \geq \frac{\kappa_p(2\alpha-1)}{2\kappa_p-2\alpha+1}$ and $1<p\leq \frac{2n+2}{n+4\alpha+1}$ or the case $\ell >\frac{\kappa_p(2\alpha-1)}{2\kappa_p-2\alpha+1}$ and $p=1$. 

In summary, we have 
\begin{theorem} \label{Talphabdd}
     Let $\alpha\in (\frac{1}{2},\frac{3}{2})$. Suppose $f\in L^p(\R^n)$ has $\widehat{f}|_{\Sigma}\in H^{\ell}(\Sigma)$, and $\phi\in C^{\infty}_c(\Sigma)$. If $\ell \geq \frac{\kappa_p(2\alpha-1)}{2\kappa_p-2\alpha+1}$ and $1<p\leq \frac{2n+2}{n+4\alpha+1}$, then 
$$\|\phi T_{\alpha}f\|_{L^2_{\xi}L^2_{\eta}}\lesssim \|f\|_{L^p(\R^n)}+\|\phi \widehat{f}\|_{H^{\ell}(\Sigma)}.$$
The estimate is also true for $p=1$ if $\ell>\frac{\kappa_1(2\alpha-1)}{2\kappa_1-2\alpha+1}=\frac{(n-1)(2\alpha-1)}{2(n-2\alpha)}.$
\end{theorem}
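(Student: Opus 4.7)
The plan is a $TT^*$ argument: I would reduce the claim to a bilinear estimate for $\langle \phi T_\alpha f, \phi T_\alpha g\rangle_{L^2_\xi L^2_\eta}$, whose kernel on $\R^n\times\R^n$ factorises as $I(x_n,y_n)\,\widecheck{\phi^2}_{0,\Sigma}(x,y)$ with
$$I(x_n,y_n)=\int_{\R}\frac{(e^{-2\pi i x_n\eta}-1)(e^{2\pi i y_n\eta}-1)}{|\eta|^{2\alpha}}\,d\eta.$$
First I would evaluate $I$ in the principal-value sense by expanding the product of differences into cosines. The three resulting cosine terms integrate against $|\eta|^{-2\alpha}$ to produce constant multiples of $|x_n|^{2\alpha-1}\sgn(x_n)$, $|y_n|^{2\alpha-1}\sgn(y_n)$, and $-|x_n-y_n|^{2\alpha-1}\sgn(x_n-y_n)$ respectively; all three integrals converge precisely on the stated range $\alpha\in(1/2,3/2)$, with common constant $C_\alpha=\int_\R (1-\cos 2\pi t)|t|^{-2\alpha}\,dt$.

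Next I would split the bilinear form into the cross-term contribution $(II)$, which carries the $|x_n-y_n|^{2\alpha-1}\sgn(x_n-y_n)$ factor, and the diagonal contribution $(I)$, which carries the two single-variable factors. Piece $(II)$ has a kernel of pure convolution type in $\R^n$; using the stationary-phase dispersion decay of $\widecheck{\phi^2}_{0,\Sigma}$ in the $x_n$ variable together with Hardy--Littlewood--Sobolev (exactly as in the proof of Corollary \ref{coro1}), I would obtain $|(II)|\lesssim \|f\|_{L^p}\|g\|_{L^p}$ on the range $1\leq p\leq (2n+2)/(n+4\alpha+1)$.

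For the diagonal piece $(I)$, the pure-$x_n$ contribution factorises as $\langle\phi\,\mathscr{F}(|x_n|^{2\alpha-2}x_n f),\phi\widehat g\rangle_{L^2(\Sigma)}$. The crucial observation is that with the parameter choice $\beta=\alpha-1/2\in(1/2,1)$ and $w(r)=i(\sgn r)^{2\alpha-3}$, the operator $S_{u^{2\beta-1}w}f$ realises exactly $\mathscr{F}(|x_n|^{2\alpha-2}x_n f)|_\Sigma$ up to constants, since $u^{2\beta-1}w(u)\simeq|u|^{2\alpha-2}\sgn(u)$. The duality pairing $L^2(\Sigma)\subset H^{-\ell}(\Sigma)\times H^\ell(\Sigma)$, combined with the previously established bound \eqref{beta-1} applied with $b=0$, then yields $|(I)|\lesssim (\|f\|_{L^p}+\|\phi\widehat f\|_{H^\ell(\Sigma)})(\|g\|_{L^p}+\|\phi\widehat g\|_{H^\ell(\Sigma)})$, provided $2\beta=2\alpha-1\leq\kappa_p$ and $\ell\geq \kappa_p\beta/(\kappa_p-\beta)=\kappa_p(2\alpha-1)/(2\kappa_p-2\alpha+1)$; these match the hypothesis of the theorem exactly.

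The main obstacle I anticipate is the careful principal-value evaluation of $I(x_n,y_n)$: the product of differences produces cancellations that are not visible term-by-term, and the $|x_n-y_n|^{2\alpha-1}\sgn(x_n-y_n)$ cross term is the genuinely new object compared with the $\alpha=1$ case handled in Corollary \ref{coro1}; it is what forces both endpoints of the admissible range $\alpha\in(1/2,3/2)$. The symmetric pure-$y_n$ contribution in $(I)$ is handled identically to the $x_n$ contribution, and the $p=1$ endpoint follows from the strict-inequality version of \eqref{beta-1} at $p=1$, which yields the condition $\ell>\kappa_1(2\alpha-1)/(2\kappa_1-2\alpha+1)$.
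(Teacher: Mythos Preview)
Your proposal is correct and follows essentially the same approach as the paper: compute the $T_\alpha^*T_\alpha$ kernel, evaluate $I(x_n,y_n)$ via the cosine expansion to get $C_\alpha(|x_n|^{2\alpha-1}\sgn x_n+|y_n|^{2\alpha-1}\sgn y_n-|x_n-y_n|^{2\alpha-1}\sgn(x_n-y_n))$, bound the cross term by Hardy--Littlewood--Sobolev, and handle the diagonal terms by recognising $\mathscr{F}(|x_n|^{2\alpha-2}x_n f)|_\Sigma$ as $S_{u^{2\beta-1}w}f$ and invoking \eqref{beta-1}. One small slip: with $\alpha\in(\tfrac12,\tfrac32)$ you get $\beta=\alpha-\tfrac12\in(0,1)$, not $(\tfrac12,1)$; the full range is needed and \eqref{beta-1} covers it.
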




\section*{Acknowledgement}
Michael Goldberg received support from Simons Foundation grant \#635369.

\Addresses
\end{document}